\documentclass{amsart}

\usepackage{graphicx} 
\usepackage{scrextend}
\usepackage{amsfonts, amsmath, amssymb,amsthm,comment}  
\usepackage{times,enumerate}
\usepackage{mathdots}%
\usepackage{nccmath}
\usepackage{mathtools}
\usepackage{ulem}
\usepackage{enumitem}
\usepackage{multicol}
    {\end{pmatrix}\end{medsize}}%
\usepackage{dsfont,bbm}
\usepackage{rotating}
\usepackage{lscape}
\usepackage{url}
\usepackage{multicol}
\usepackage{thmtools, thm-restate}
\usepackage{blkarray}
\usepackage{multicol}
\usepackage[margin=2.5cm]{geometry}
 \usepackage{hyperref}
\usepackage{cancel}
\usepackage{multirow}
\usepackage{pifont}
\usepackage{tikz}
\usetikzlibrary{topaths}
\tikzstyle{every picture}+=[remember picture,inner xsep=0,inner ysep=0.25ex]
\usetikzlibrary{calc}
\usepackage{kbordermatrix}
\renewcommand{\kbldelim}{(}
\renewcommand{\kbrdelim}{)}
\renewcommand{\kbrowstyle}{\displaystyle}
\renewcommand{\kbcolstyle}{\displaystyle}
\def\VR{\kern-\arraycolsep\strut\vrule &\kern-\arraycolsep}
\def\vr{\kern-\arraycolsep & \kern-\arraycolsep}
\makeatletter
\newcommand*{\sublabel}[1]{%
    \let\old@currentlabel\@currentlabel%
    \renewcommand{\@currentlabel}{\theenumii}%
    \label{#1}%
    \let\@currentlabel\old@currentlabel%
}
\makeatother

\allowdisplaybreaks

\DeclareMathOperator{\pr}{pr}
\DeclareMathOperator{\supp}{supp}

\newcommand{\mc}{\mathcal}

\hypersetup{
    colorlinks,
    linkcolor={blue},
    citecolor={blue},
    urlcolor={blue}
}

\DeclareMathOperator{\Rank}{rank}

\makeatletter
\def\widebreve{\mathpalette\wide@breve}
\def\wide@breve#1#2{\sbox\z@{$#1#2$}%
     \mathop{\vbox{\m@th\ialign{##\crcr
\kern0.08em\brevefill#1{0.8\wd\z@}\crcr\noalign{\nointerlineskip}%
                    $\hss#1#2\hss$\crcr}}}\limits}
\def\brevefill#1#2{$\m@th\sbox\tw@{$#1($}%
  \hss\resizebox{#2}{\wd\tw@}{\rotatebox[origin=c]{90}{\upshape(}}\hss$}
\makeatletter

\newcommand{\RR}{\mathbb R}
\newcommand{\R}{\mathbb R}

\newcommand{\uH}{\underline{H}}

\newcommand{\uh}{\underline{h}}

\newcommand{\cZ}{\mathcal Z}

\newcommand{\cC}{\mathcal C}

\newcommand{\benu}{\begin{enumerate}}
\newcommand{\eenu}{\end{enumerate}}
\newcommand{\bop}{\begin{opomba}}
\newcommand{\eop}{\end{opomba}}

\newtheorem{theorem}{Theorem}[section]

\newtheorem{proposition}[theorem]{Proposition}

\newtheorem*{problem*}{Problem}

\theoremstyle{definition}

\newtheorem{example}[theorem]{Example}

\newtheorem{remark}[theorem]{Remark}

\numberwithin{equation}{section}

\begin{document}

\title[TMP on cubic curves in Weierstrass form]{A constructive approach to the truncated moment problem on cubic curves in Weierstrass form}

\author[A.\ Bhardwaj]{Abhishek Bhardwaj}
\address{Abhishek Bhardwaj,
The Australian National University}
\email{Abhishek.Bhardwaj@anu.edu.au, Abhishek.bhardwaj.math@gmail.com}
\author[A.\ Zalar]{Alja\v z Zalar${}^{2}$}
\address{Alja\v z Zalar,
University of Ljubljana, 
Faculty of Computer and Information Science  \& 
Faculty of Mathematics and Physics, \&
Institute of Mathematics, Physics and Mechanics, Ljubljana, Slovenia.}
\email{aljaz.zalar@fri.uni-lj.si}
\thanks{${}^2$Supported by the ARIS (Slovenian Research and Innovation Agency)
research core funding No.\ P1-0288 and grants J1-50002, J1-60011, J1-70017.}

\subjclass[2020]{Primary 44A60, 47A57, 47A20; Secondary 47N40.}

\keywords{Truncated moment problems, representing measure, cubic curve in Weierstrass form, moment matrix, flat extension}
\date{\today}


\maketitle

\begin{abstract}
    In this paper, we develop a constructive solution for the pure truncated moment problem on cubic curves in Weierstrass form, establishing the existence of a representing measure whose number of atoms equals the rank of the associated moment matrix. By a recent result of Baldi, Blekherman, and Sinn, for projectively smooth curves whose projective closure has exactly one real point at infinity, the existence of such a rank-attaining atomic measure is equivalent to the existence of a representing measure; consequently, the TMP is constructively solved for this class of curves. We also present a numerical degree--$6$ example in which every minimal representing measure supported on the cubic curve requires $\Rank M(3)+1$ atoms, where $M(3)$ denotes the moment matrix. Finally, we provide a constructive solution for the symmetric case, i.e., when all moments of odd degree in $y$ vanish.
\end{abstract}

\section{Introduction}

Given a real bivariate truncated moment sequence
\begin{equation*}
\beta \equiv \beta^{(d)}=\{\beta_{ij}: i,j\ge 0,\; i+j\le d\},
\end{equation*}
of degree $d$ and a closed subset $K\subseteq\mathbb{R}^2$, the \textbf{truncated moment problem} supported on $K$
(\textbf{$K$--TMP}) asks for conditions that guarantee the existence of a positive Borel measure $\mu$ on $\mathbb{R}^2$
with $\supp(\mu)\subseteq K$ such that
\begin{equation}\label{moment-measure-cond}
\beta_{ij}=\int_{K} x^i y^j\, d\mu,
\qquad i,j\in\mathbb{Z}_+,\;\; 0\le i+j\le d.
\end{equation}
If such a measure exists, we say that $\beta$ \textbf{admits a representing measure supported on $K$}, and we call $\mu$
a \textbf{$K$--representing measure} for $\beta^{(d)}$.

Solutions to the $K$--TMP can be grouped according to how explicitly they characterize the existence of a
$K$--representing measure:
\begin{enumerate}
\item \textbf{Abstract solutions} provide existence criteria that are typically partially algorithmic. These results are
theoretically fundamental, but they can be difficult to apply in concrete numerical instances.
\item\label{concrete-sol} \textbf{Concrete solutions} give necessary and sufficient conditions that are directly verifiable in
explicit examples. Such results are important for applications, but they are often challenging to obtain.
\item\label{constructive-sol} \textbf{Constructive solutions} are concrete solutions that additionally produce a representing
measure (or an explicit procedure to construct one). They provide the strongest form of solvability and are especially
valuable in applications.
\end{enumerate}

We next survey several landmark results and representative developments for the $K$--TMP. The classical univariate cases
$K=\mathbb{R}$, $K=[a,\infty)$, and $K=[a,b]$ (with $a<b$) admit \textit{constructive} solutions in terms of the associated
Hankel matrices; see \cite{CF91}, building on certificates of positivity for polynomials on $K$ (with special cases treated
in \cite{AK62, KN77, Ioh82, ST}). In the multivariate setting, the celebrated \textit{Flat Extension Theorem} of Curto and
Fialkow \cite{CF96} characterizes solvability of the TMP via the existence of a rank-preserving positive extension of the
moment matrix. This theorem has become a primary tool for obtaining \textit{constructive} solutions for the $K$--TMP on planar
conics \cite{CF02,CF04,CF05,Fia15}, on the cubic curve $y=x^3$ \cite{Fia11}, and on the full plane $K=\mathbb{R}^2$
\cite{CY16} (the latter being first solved \textit{concretely} in \cite{fn} using convex-geometric methods).

A second major development is the \textit{Truncated Riesz--Haviland Theorem} \cite{CF08}, which reduces solvability of the
$K$--TMP to the existence of a $K$--positive extension of the associated Riesz functional. Here, $K$--positivity means
nonnegativity on every polynomial that is nonnegative on $K$. Applying this criterion in concrete settings typically
requires explicit algebraic certificates for polynomials that are nonnegative on $K$. Such certificates for \textit{planar
cubics} were established recently by Kummer and the second author \cite{KZ}, drawing on tools from algebraic geometry and
yielding \textit{concrete} TMP criteria in terms of positive semidefiniteness of two or three matrices, depending on the
geometry of the curve.

A third milestone is the \textit{Core Variety Theorem} of Blekherman and Fialkow \cite{BF20}, which characterizes the
existence of a representing measure in terms of the non-emptiness of an algebraic variety determined by the kernel of the
Riesz functional. The first concrete application of this theorem \cite{fz} provides a
simplified solution to the TMP on $y=x^3$, complementing the earlier approach in \cite{Fia11} using the Flat Extension Theorem.

Beyond these general frameworks, planar curves whose irreducible components are rational may also be treated via the
\textit{univariate TMP with gaps}, where only a subset of moments is specified. This perspective, together with
\textit{matrix completion} techniques, can substantially simplify the analysis and can yield alternative \textit{constructive}
solutions for planar conics as well as for irreducible rational and reducible planar cubics
\cite{BZ21,Zal21,Zal22b,yz,YZ25+,KZ}. This approach also applies to a quartic case, namely curves of the form
$xy=x^4+q(x)$ with $q\in\mathbb{R}[x]_{\le 3}$ \cite{yz+}. Finally, for rational planar curves defined by
$p(x,y)=y-q(x)$ or $p(x,y)=yq(x)-1$ with $q\in\mathbb{R}[x]$, solvability of the TMP is characterized by the existence of a
positive extension of the moment matrix; see \cite{Fia11,Zal23}. We also note that very recently the authors of \cite{EKT25}
solved the TMP supported on the vertices of the hypercube, motivated by applications in quantum information theory. For
further background and additional aspects of the moment problem we refer the reader to the recent monograph of
Schm\"{u}dgen \cite{sch}.
\medskip

Let \(K\) be the planar cubic $\cZ(p):=\{(x,y)\in \RR^2\colon p(x,y)=0\}$ where 
$p(x,y):=y^2-x^3+ax+b$, $a,b\in \RR$.
We call the $\cZ(p)$-TMP \textbf{$p$--pure} if the column dependence relations in the moment matrix \(M_n(\beta)\) are exactly those obtained using \textit{recursiveness} and linearity from the single column relation \(Y^2 = X^3+aX+b\textit{1}\) (see Section \ref{sec:prel} for terminology). 

In this paper we address the following problem:

\begin{problem*}
    For $p(x,y)=y^2-x^3+ax+b$, $a,b\in \RR$,
    characterize the existence of a flat extension of a moment matrix for a $\cZ(p)$--pure TMP.
\end{problem*}

As described above, a concrete solution on the curve $\cZ(p)$ 
is known \cite{KZ}. Specifically, in addition to the positivity of the moment matrix, one must verify the positivity of an additional matrix that is completely determined by the given moment data.
This result can therefore be used as a preliminary test before attempting to construct a representing measure.
However, from the application point of view, one would like to construct a representing measure.
If the test from \cite{KZ} is passed, one must then carry out a more detailed analysis based on the Flat Extension Theorem.
Moreover, by a recent result of Baldi, Blekherman, and Sinn \cite{BBS+}, if $\mathcal Z(p)$ is connected, the
homogenization of $p$ defines a smooth projective curve, and the projective closure of $\mathcal Z(p)$ has exactly one
real point at infinity, then a solution to the Problem yields a complete constructive solution to the TMP as well.
 This follows from the bound on the number of atoms in a minimal representing measure, which is equal to the rank of the moment matrix in this case. However, in case of two connected components there are moment sequences where a minimal representing measure requires \textit{one more atom} \cite{BBS+} (for a concrete numerical example see Example \ref{230809-1150} below). 
 If the first flat extension step fails in our analysis, then this is the scenario that arises.
\medskip

In this paper, after a preliminary Section \ref{sec:prel}, we solve the Problem in Section \ref{section-with-results} in terms of roots of certain quadratic polynomial with coefficients that depend entirely on the moment data.
Namely, a flat extension exists if and only if this polynomial 
has a real root (see Theorem \ref{FE-analysis}). As a corollary of the results from \cite{BBS+} this provides a constructive solution to the TMP for a large portion of projectively smooth cubic curves (see Remark \ref{rem:smooth}). In Section \ref{section-with-examples} we provide numerical examples for Theorem~\ref{FE-analysis}. That is, we construct a sequence without flat extension, an example with a unique flat extension and a sequence with no measure, demonstrating different cases that arise 
(see Examples \ref{230809-1150}-\ref{230809-2031}). Finally, in Section \ref{symmetric}, we solve the \textit{symmetric} TMP for $\cZ(p)$ constructively, i.e., $\beta_{ij}=0$
whenever $j$ is odd, using the univariate reduction technique together with the
Truncated Riesz--Haviland Theorem \cite{CF08} and the description of nonnegative polynomials on
$[0,c]\cup[d,\infty)$ from \cite{KMS05}.

\section{Preliminaries}
\label{sec:prel}

In this section we fix notation and terminology, and recall several tools needed for our main results.

\medskip

We write $\mathbb{R}^{n\times m}$ for the space of real $n\times m$ matrices. For a matrix $M$, the linear span of its
columns is its \textbf{column space}, denoted by $\mathcal{C}(M)$. The set of real symmetric $n\times n$ matrices is
denoted by $S_n$. For $A\in S_n$, the notation $A\succ 0$ (resp.\ $A\succeq 0$) means that $A$ is positive definite
(pd) (resp.\ positive semidefinite (psd)).

Throughout this section, let $d\in\mathbb{N}$ and let
\[
\beta=\beta^{(d)}=\{\beta_{ij}\}_{i,j\in\mathbb{Z}_+,\; 0\le i+j\le d}
\]
be a bivariate truncated moment sequence of degree $d$.

\subsection{Moment matrices and column relations}
\label{subsec:momMat}

Let $k:=\big\lceil \tfrac{d}{2}\big\rceil$. Consider the degree--lexicographic ordering of monomials
\[
1,\;X,\;Y,\;X^2,\;XY,\;Y^2,\;\ldots,\;X^k,\;X^{k-1}Y,\;\ldots,\;Y^k .
\]
The \textbf{moment matrix} associated with $\beta^{(d)}$ is the block matrix
\begin{equation}\label{281021-1448}
    M(k)\equiv M(k)(\beta):=
    \begin{pmatrix}
        M[0,0](\beta) & M[0,1](\beta) & \cdots & M[0,k](\beta)\\
        M[1,0](\beta) & M[1,1](\beta) & \cdots & M[1,k](\beta)\\
        \vdots & \vdots & \ddots & \vdots\\
        M[k,0](\beta) & M[k,1](\beta) & \cdots & M[k,k](\beta)
    \end{pmatrix},
\end{equation}
where, for $0\le i,j\le k$,
\[
M[i,j](\beta):=
\begin{pmatrix}
\beta_{i+j,0} & \beta_{i+j-1,1} & \beta_{i+j-2,2} & \cdots & \beta_{i,j}\\
\beta_{i+j-1,1} & \beta_{i+j-2,2} & \beta_{i+j-3,3} & \cdots & \beta_{i-1,j+1}\\
\beta_{i+j-2,2} & \beta_{i+j-3,3} & \beta_{i+j-4,4} & \cdots & \beta_{i-2,j+2}\\
\vdots & \vdots & \vdots & \ddots & \vdots\\
\beta_{j,i} & \beta_{j-1,i+1} & \beta_{j-2,i+2} & \cdots & \beta_{0,i+j}
\end{pmatrix}.
\]
If $d$ is odd, then $2k=d+1$, so some entries in the bottom-right block $M[k,k](\beta)$ would involve moments of degree
$2k>d$. Accordingly, $M(k)$ is only partially specified.

\medskip

Let
\[
\mathbb{R}[x,y]_{\le k}:=\{p\in\mathbb{R}[x,y]: \deg p\le k\},
\]
where $\deg p$ denotes total degree. For a polynomial
$p(x,y)=\sum_{i,j} a_{ij}x^iy^j\in\mathbb{R}[x,y]_{\le k}$,
we define its \textbf{evaluation} $p(X,Y)$ in $M(k)$ by
\[
p(X,Y):=\sum_{i,j} a_{ij}\,X^iY^j,
\]
where $X^iY^j$ denotes the column of $M(k)$ indexed by the monomial $X^iY^j$ in the chosen ordering. Thus
$p(X,Y)\in \mathcal{C}(M(k))$. If $p(X,Y)=0$, we say that $p$ is a \textbf{column relation} of $M(k)$.

Recall (see, e.g., \cite{CF96}) that if $\beta$ admits a representing measure $\mu$ with
$$
\supp(\mu)\subseteq \mathcal{Z}(p):=\{(x,y)\in\mathbb{R}^2:\ p(x,y)=0\},
$$
then $p$ is a column relation of $M(k)$.

We say that $M(k)$ is \textbf{recursively generated} if, whenever $p,q,pq\in\mathbb{R}[x,y]_{\le k}$ and $p$ is a column
relation of $M(k)$, then $pq$ is also a column relation of $M(k)$. Finally, $M(k)$ is called \textbf{$p$--pure} if its
column relations are exactly those obtained from $p$ by linearity and recursiveness.

\subsection{The Riesz functional}

The linear functional $L_{\beta}:\mathbb{R}[x,y]_{\le d}\to\mathbb{R}$ defined by
\[
    L_{\beta}(p)
    :=
    \sum_{\substack{i,j\in\mathbb{Z}_+\\ 0\le i+j\le d}} a_{ij}\,\beta_{ij},
    \qquad
    \text{where }
    p(x,y)=\sum_{\substack{i,j\in\mathbb{Z}_+\\ 0\le i+j\le d}} a_{ij}x^iy^j,
\]
is called the \textbf{Riesz functional} associated with the sequence $\beta$.

\subsection{Affine linear transformations}\label{affine linear-trans}

The existence of representing measures is invariant under invertible affine linear transformations of the form
\begin{equation}\label{alt}
    \phi(x,y)=(\phi_1(x,y),\phi_2(x,y))
    :=
    (a+bx+cy,\ d+ex+fy), \qquad (x,y)\in\mathbb{R}^2,
\end{equation}
where $a,b,c,d,e,f\in\mathbb{R}$ and $bf-ce\neq 0$.

Let $L_{\beta}$ be the Riesz functional associated with $\beta$, and define the transformed sequence
\[
\widetilde{\beta}\equiv \phi(\beta):=\{\widetilde{\beta}_{ij}\}_{i,j\in\mathbb{Z}_+,\; 0\le i+j\le 2k}
\]
by
\[
\widetilde{\beta}_{ij}:=L_{\beta}\!\bigl(\phi_1(x,y)^i\,\phi_2(x,y)^j\bigr).
\]

\begin{proposition}[{\cite[Proposition 1.9]{CF05}}] \label{251021-2254}
	Assume the notation above.
	\begin{enumerate}
		\item $M(k)(\beta)$ is positive semidefinite if and only if $M(k)(\widetilde \beta)$ is positive semidefinite.
		\item $\Rank M(k)(\beta)=\Rank M(k)(\widetilde\beta)$.
		\item $M(k)(\beta)$ is recursively generated if and only if $M(k)(\widetilde\beta)$ is recursively generated.
		\item\label{291021-2333} $\beta$ admits an $r$--atomic $K$--representing measure if and only if $\widetilde \beta$ admits an $r$--atomic $\phi(K)$--representing measure.
	\end{enumerate}
\end{proposition}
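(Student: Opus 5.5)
The key observation is that $\phi$ induces a linear change of basis on the space of polynomials of degree at most $k$. Concretely, define the composition map
\[
T_\phi:\mathbb{R}[x,y]_{\le k}\to\mathbb{R}[x,y]_{\le k},\qquad T_\phi(q):=q\circ\phi .
\]
Since $\phi$ is affine, $T_\phi$ preserves the degree filtration. Since $bf-ce\neq 0$, $\phi$ is invertible with affine inverse, so $T_\phi$ is invertible with inverse $T_{\phi^{-1}}$.

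Let $P$ be the matrix of $T_\phi$ in the monomial basis, ordered degree-lexicographically. Then for $q,r\in\mathbb{R}[x,y]_{\le k}$ with coefficient vectors $\hat q,\hat r$, the definition of $\widetilde\beta$ and linearity of $L_\beta$ give
\[
\hat r^{T} M(k)(\widetilde\beta)\,\hat q=L_{\widetilde\beta}(qr)=L_\beta\bigl((qr)\circ\phi\bigr)=L_\beta\bigl((q\circ\phi)(r\circ\phi)\bigr)=\widehat{r\circ\phi}^{\,T} M(k)(\beta)\,\widehat{q\circ\phi}.
\]
Hence $M(k)(\widetilde\beta)=P^{T}M(k)(\beta)P$ with $P$ invertible. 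This congruence immediately yields (1) and (2): positive semidefiniteness and rank are preserved under congruence by an invertible matrix.

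For (3), I use the same identity to identify column relations. From the displayed formula, $q$ is a column relation of $M(k)(\widetilde\beta)$, i.e.\ $M(k)(\widetilde\beta)\hat q=0$, if and only if $M(k)(\beta)\widehat{q\circ\phi}=0$, i.e.\ $q\circ\phi$ is a column relation of $M(k)(\beta)$. Now suppose $M(k)(\beta)$ is recursively generated, and let $q,s,qs\in\mathbb{R}[x,y]_{\le k}$ with $q$ a column relation for $\widetilde\beta$. Then $q\circ\phi$ is a column relation for $\beta$. Since composition with an invertible affine map preserves degrees, $s\circ\phi$ and $(qs)\circ\phi=(q\circ\phi)(s\circ\phi)$ also have degree at most $k$. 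Recursiveness for $\beta$ therefore makes $(qs)\circ\phi$ a column relation for $\beta$, and so $qs$ is a column relation for $\widetilde\beta$. The converse follows by applying the same argument with $\phi^{-1}$, using that $\phi^{-1}(\widetilde\beta)=\beta$.

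For (4), suppose $\mu=\sum_{i=1}^r\rho_i\delta_{z_i}$ represents $\beta$ with all $z_i\in K$. Consider the push-forward $\phi_*\mu=\sum_i\rho_i\delta_{\phi(z_i)}$. It has exactly $r$ atoms because $\phi$ is injective, and it is supported in $\phi(K)$. It represents $\widetilde\beta$, since
\[
\int x^iy^j\,d\phi_*\mu=\int\phi_1^i\phi_2^j\,d\mu=L_\beta(\phi_1^i\phi_2^j)=\widetilde\beta_{ij}.
\]
The converse again uses $\phi^{-1}$.

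There is no substantive obstacle here. The only point needing care is checking that $\phi^{-1}(\widetilde\beta)=\beta$, which I would verify by the same functoriality computation: $L_{\widetilde\beta}(q)=L_\beta(q\circ\phi)$ for all $q$ of degree at most $2k$, and then compose with $\phi^{-1}$.
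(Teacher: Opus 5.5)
Your proposal is correct and follows essentially the standard route. The paper gives no proof of its own but cites \cite[Proposition 1.9]{CF05}, whose argument likewise rests on three steps: the congruence $M(k)(\widetilde\beta)=J^{T}M(k)(\beta)J$ with the invertible matrix $J\hat q=\widehat{q\circ\phi}$; the induced correspondence $q\leftrightarrow q\circ\phi$ between column relations, which yields (1)--(3); and push-forward of atomic measures under $\phi$ and $\phi^{-1}$ for (4).
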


\section{
Existence of a $(\Rank M(n))$--atomic representing measure}
\label{section-with-results}

In this section we characterize when a sequence $\beta\equiv \beta^{(2n)}$, whose moment matrix $M(n)$ is positive semidefinite and $(y^2-x^3-ax-b)$--pure (see Subsection \ref{subsec:momMat}) with $n\ge 3$, admits a $\big(\operatorname{rank} M(n)\big)$--atomic representing measure.\\

Since $M(n)$ is $(y^2-x^3-ax-b)$--pure, its column dependence relations are precisely linear combinations of the relations
\[
    X^iY^{2+j}=X^{3+i}Y^j+aX^{1+i}Y^j+bX^iY^j,
    \qquad i,j\ge 0,\;\; i+j\le n-3.
\]
In particular, every column indexed by a monomial involving $X^3$ can be reduced to a linear combination of columns indexed
by monomials with $X$--exponent at most $2$. It follows that
\[
\operatorname{rank} M(n)=3n,
\]
and one may choose a basis of the column space $\mathcal{C}\!\left(M(n)\right)$ of the form
\[
\mathcal{B}
=
\{1,\;X,\;Y\}\ \cup\ 
    \bigcup_{k=2}^{n}\{X^{2}Y^{k-2},\; XY^{k-1},\;Y^{k}\}.
\]

Clearly, if $\beta^{(2n)}$ admits a representing measure, then it admits a positive semidefinite, recursively generated
moment-matrix extension of the form
\begin{equation}\label{moment-mat-ext-n+1}
    M(n+1)\equiv
    \begin{pmatrix}
        M(n) & B(n+1)\\
        B(n+1)^{t} & C(n+1)
    \end{pmatrix}.
\end{equation}
Observe that, by recursiveness, all blocks of $B(n+1)$ are determined by the data $\beta^{(2n)}$ except for the block
$B[n,n+1]$. The entries of this block involve moments of total degree $2n+1$, and among these, only three are not fixed by
the recursion. We denote the corresponding undetermined moments by $\theta,\phi,\psi$. Consequently, $B[n,n+1]$ has the
form
\begin{equation}\label{B-block}
\kbordermatrix{
 & X^{n+1} & X^{n}Y & \dotsc & \dotsc & X^{2}Y^{n-1} & XY^{n} & Y^{n+1} \\
\textit{1}       & \beta_{n+1,0}   & \beta_{n,1}     & \dotsc & \dotsc & \beta_{2,n-1}   & \beta_{1,n}     & \beta_{0,n+1} \\
X               & \beta_{n+2,0}   & \beta_{n+1,1}   & \cdots & \cdots & \beta_{3,n-1}   & \beta_{2,n}     & \beta_{1,n+1} \\
\vdots          & \vdots          & \vdots          &        &        & \vdots          & \vdots          & \vdots \\
\vdots          & \vdots          & \vdots          &        &        & \vdots          & \vdots          & \beta_{3,2n-2} \\
X^{2}Y^{n-2}    & \beta_{n+3,n-2} & \beta_{n+2,n-1} & \cdots & \cdots & \beta_{4,2n-3}  & \beta_{3,2n-2}  & \theta \\
XY^{n-1}        & \beta_{n+2,n-1} & \beta_{n+1,n}   & \cdots & \cdots & \beta_{3,2n-2}  & \theta          & \phi \\
Y^{n}           & \beta_{n+1,n}   & \beta_{n,n+1}   & \cdots & \beta_{3,2n-2} & \theta & \phi & \psi
}.
\end{equation}

Indeed, the degree $(2n+1)$ moments of the form $\beta_{i+3,\,2n-i-2}$ for $i=0,\ldots,2n-2$ are determined recursively by
the relation $Y^2=X^3+aX+b$, namely,
\[
    \beta_{i+3,\,2n-i-2}
    =
    \beta_{i,\,2n-i}
    - a\,\beta_{i+1,\,2n-i-2}
    - b\,\beta_{i,\,2n-i-2}.
\]

Next, consider the compressions (to the rows indexed by $\mathcal{B}$) of the columns of $M(n+1)$ indexed by
$X^{2}Y^{n-1}$, $XY^{n}$, and $Y^{n+1}$. These compressed columns have the respective forms
\[
    [X^{2}Y^{n-1}]_{\mathcal{B}}
    =
    \begin{pmatrix} w\\ \theta \end{pmatrix},\qquad
    [XY^{n}]_{\mathcal{B}}
    =
    \begin{pmatrix} q\\ \theta\\ \phi \end{pmatrix},\qquad
    [Y^{n+1}]_{\mathcal{B}}
    =
    \begin{pmatrix} p\\ \theta\\ \phi\\ \psi \end{pmatrix},
\]
where $w:=(w_1,\ldots,w_{3n-1})^{t}$, $q:=(q_1,\ldots,q_{3n-2})^{t}$, and
$p:=(p_1,\ldots,p_{3n-3})^{t}$. Moreover, each entry $w_i,q_i,p_i$ is a moment $\beta_{ij}$ with
$i,j\ge 0$ and $i+j\le 2n$.

Define the functions $\phi_{\ast}(\theta)$, $\psi_{\ast}(\theta,\phi)$, and $Q(\theta,\phi,\psi)$ by
\begin{align}\label{phi-ast}
\begin{split}
    \phi_{\ast}(\theta)
        & :=
            [X^{2}Y^{n-1}]_{\mathcal{B}}^{t}\,
            [M(n)]_{\mathcal{B}}^{-1}\,
            [X^{2}Y^{n-1}]_{\mathcal{B}}
            +a\,\beta_{2,2n-2}+b\,\beta_{1,2n-2},\\
    \psi_{\ast}(\theta,\phi)
        & :=
            [XY^{n}]_{\mathcal{B}}^{t}\,
            [M(n)]_{\mathcal{B}}^{-1}\,
            [X^{2}Y^{n-1}]_{\mathcal{B}}
            +a\,\beta_{1,2n-1}+b\,\beta_{0,2n-1},\\
    Q(\theta,\phi,\psi)
        & :=
            [Y^{n+1}]_{\mathcal{B}}^{t}\,
            [M(n)]_{\mathcal{B}}^{-1}\,
            [X^{2}Y^{n-1}]_{\mathcal{B}}
            -
            [XY^{n}]_{\mathcal{B}}^{t}\,
            [M(n)]_{\mathcal{B}}^{-1}\,
            [XY^{n}]_{\mathcal{B}}.
\end{split}
\end{align}
Finally, set
\begin{equation}\label{def-R}
    R(\theta):=Q\!\bigl(\theta,\phi_{\ast}(\theta),\psi_{\ast}(\theta,\phi_{\ast}(\theta))\bigr).
\end{equation}

The next proposition shows that $R(\theta)$ has a particularly simple form.

\begin{proposition}\label{quadratic-R-theta}
    The function $R(\theta)$ is a quadratic polynomial in $\theta$.
\end{proposition}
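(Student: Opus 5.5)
Write $A:=[M(n)]_{\mathcal B}^{-1}$, which is a fixed symmetric matrix determined by the data. Partition it according to the last three basis elements $X^2Y^{n-2}, XY^{n-1}, Y^n$ of $\mathcal B$, which are the rows where $\theta,\phi,\psi$ appear. The plan is to expand each of $\phi_\ast$, $\psi_\ast$, $Q$ as polynomials in the unknowns and track degrees. Writing the compressed vectors as $[X^2Y^{n-1}]_{\mathcal B}=u_0+\theta e_{3n}$, $[XY^n]_{\mathcal B}=v_0+\theta e_{3n-1}+\phi e_{3n}$ and $[Y^{n+1}]_{\mathcal B}=z_0+\theta e_{3n-2}+\phi e_{3n-1}+\psi e_{3n}$, where $u_0,v_0,z_0$ are constant, we get the following.

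First, $\phi_\ast(\theta)=(u_0+\theta e_{3n})^tA(u_0+\theta e_{3n})+\text{const}$ is a quadratic in $\theta$ with leading coefficient $A_{3n,3n}$. Next, $\psi_\ast(\theta,\phi)$ is bilinear-affine: it is affine in $\phi$ with coefficient $e_{3n}^tA(u_0+\theta e_{3n})$, which is affine in $\theta$. Its $\phi$-free part is quadratic in $\theta$. Finally,
$$Q=(z_0+\theta e_{3n-2}+\phi e_{3n-1}+\psi e_{3n})^tA(u_0+\theta e_{3n})-(v_0+\theta e_{3n-1}+\phi e_{3n})^tA(v_0+\theta e_{3n-1}+\phi e_{3n}).$$

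A naive substitution gives a high degree. The term $\psi\cdot e_{3n}^tA(u_0+\theta e_{3n})$ contains $\psi_\ast$, which contains $\phi_\ast\cdot(\text{affine in }\theta)$, so it has degree up to $4$ in $\theta$. The term $-\phi^2A_{3n,3n}$ also has degree $4$. The heart of the proof is therefore to show that the quartic and cubic terms cancel.

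Set $\ell(\theta):=e_{3n}^tA(u_0+\theta e_{3n})$, which is affine in $\theta$. Then
$$\psi_\ast=\phi\,\ell(\theta)+(\text{quadratic in }\theta),$$
and in $Q$ the $\phi$- and $\psi$-dependent terms are
$$\psi\,\ell(\theta)+\phi\,e_{3n-1}^tA(u_0+\theta e_{3n})-2\phi\,e_{3n}^tA(v_0+\theta e_{3n-1})-\phi^2A_{3n,3n}.$$
Substituting $\psi=\psi_\ast$ produces $\phi\,\ell(\theta)^2-\phi^2A_{3n,3n}$ plus terms of the form $\phi\cdot(\text{affine})$ and (quadratic in $\theta$)$\cdot$(affine). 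We then substitute $\phi=\phi_\ast=\theta^2A_{3n,3n}+2\theta\,e_{3n}^tAu_0+c$ and use the identity
$$\ell(\theta)^2=\bigl(\theta A_{3n,3n}+e_{3n}^tAu_0\bigr)^2=A_{3n,3n}\bigl(\phi_\ast-c\bigr)+(e_{3n}^tAu_0)^2.$$
This gives
$$\phi_\ast\ell^2-\phi_\ast^2A_{3n,3n}=\phi_\ast\bigl((e_{3n}^tAu_0)^2-cA_{3n,3n}\bigr),$$
which is quadratic. That is the main cancellation.

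The remaining pieces are as follows. The term $\ell(\theta)\cdot(\text{quadratic part of }\psi_\ast)$ is cubic, and $\phi_\ast\cdot(\text{affine coefficients})$ is also cubic. I expect their $\theta^3$ coefficients to cancel by a similar computation, using the symmetry of $A$: the quadratic part of $\psi_\ast$ is $(v_0+\theta e_{3n-1})^tA(u_0+\theta e_{3n})$, whose $\theta^2$ coefficient is $A_{3n-1,3n}$.

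The main obstacle is this bookkeeping of the cubic coefficient. I would carry it out by collecting all $\theta^3$ contributions:
\begin{itemize}
\item from $\ell\cdot\psi_\ast$: $A_{3n,3n}A_{3n-1,3n}$;
\item from $\phi_\ast e_{3n-1}^tA\theta e_{3n}$: $A_{3n,3n}A_{3n-1,3n}$;
\item from $-2\phi_\ast\theta e_{3n}^tAe_{3n-1}$: $-2A_{3n,3n}A_{3n-1,3n}$;
\item from $\theta e_{3n-2}^tA\theta e_{3n}$: at most quadratic.
\end{itemize}
These sum to zero, leaving $R(\theta)$ of degree at most $2$. If needed, I would also double-check the residual $\theta^4$ coefficient from the main cancellation once more before concluding.
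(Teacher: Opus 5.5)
Your proof is correct and is essentially the paper's argument. The paper also writes the three compressed columns as fixed vectors plus $\theta,\phi,\psi$ in the last coordinates, and expands everything in the entries of $[M(n)]_{\mathcal B}^{-1}$; its $\varepsilon$ and $v$ are your $A_{3n,3n}$ and the last column of $A$. It then checks that the $\theta^4$ and $\theta^3$ coefficients of $R$ vanish. Your identity $\ell(\theta)^2-A_{3n,3n}\phi_\ast(\theta)=\mathrm{const}$ is a tidier packaging of the paper's $k_4-\ell_4=\varepsilon^3-\varepsilon^3$ computation together with part of its $k_3-\ell_3$ check, so no residual $\theta^4$ term needs re-checking. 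Your remaining cubic tally, using the symmetry of $A$, accounts for all the other cubic contributions.
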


The following theorem resolves the existence of a $(\Rank M(n))$--atomic
$\mc Z(p)$--representing measure for $\beta$.

\begin{theorem}\label{FE-analysis}
Assume the notation introduced above. Let $M(n)\succeq 0$ be $p$--pure, where
\[
p(x,y)=y^{2}-x^{3}-ax-b,\qquad a,b\in\mathbb{R}.
\]
Then the following statements are equivalent:
\begin{enumerate}
\item\label{ft-theorem-pt1}
    $\beta$ admits a $\bigl(\operatorname{rank} M(n)\bigr)$--atomic $\mathcal{Z}(p)$--representing measure.
\item\label{ft-theorem-pt2}
    $M(n)$ admits a flat extension $M(n+1)$.
\item\label{ft-theorem-pt3}
    The quadratic polynomial $R(\theta)$ has a real root.
\end{enumerate}
Moreover, suppose that $R(\theta)=R_2\theta^2+R_1\theta+R_0\in\mathbb{R}[\theta]$ is nonzero and has a real root, and set
$\Delta:=R_1^2-4R_0R_2$. Then:
\begin{enumerate}
\item If $\Delta=0$, there is a unique $\bigl(\operatorname{rank} M(n)\bigr)$--atomic representing measure.
\item If $\Delta>0$, there are exactly two $\bigl(\operatorname{rank} M(n)\bigr)$--atomic representing measures.
\end{enumerate}
\end{theorem}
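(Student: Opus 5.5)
The plan is to prove \eqref{ft-theorem-pt1}$\Leftrightarrow$\eqref{ft-theorem-pt2} from the Flat Extension Theorem of Curto and Fialkow \cite{CF96}, and \eqref{ft-theorem-pt2}$\Leftrightarrow$\eqref{ft-theorem-pt3} by a Schur-complement analysis of the extension \eqref{moment-mat-ext-n+1}.

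For \eqref{ft-theorem-pt2}$\Rightarrow$\eqref{ft-theorem-pt1}, a flat extension $M(n+1)$ of $M(n)\succeq 0$ yields, by \cite{CF96}, a unique $\operatorname{rank} M(n)$--atomic representing measure. Its support lies in the variety of the column relations of $M(n+1)$, which include $p$, so the measure is $\mathcal Z(p)$--supported.

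For \eqref{ft-theorem-pt1}$\Rightarrow$\eqref{ft-theorem-pt2}, take an $r$--atomic measure with $r=\operatorname{rank} M(n)$. Its moment matrix $M(n+1)$ satisfies
\[
\operatorname{rank} M(n)\le \operatorname{rank} M(n+1)\le r=\operatorname{rank} M(n),
\]
so it is a flat extension.

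For \eqref{ft-theorem-pt2}$\Leftrightarrow$\eqref{ft-theorem-pt3}, write out what flatness requires.
\begin{itemize}
\item Any flat, hence recursively generated, extension must keep $p$ as a relation. So all columns with $X$--exponent $\ge 3$ of degree $n+1$ are forced by the recursion. The only free data are $\theta,\phi,\psi$ in $B[n,n+1]$ together with $C(n+1)$.
\item Flatness means each new column lies in $\mathcal C(M(n))$. It is then determined by its compression to $\mathcal B$ via $[M(n)]_{\mathcal B}^{-1}$, and $C(n+1)$ is forced as $B^tM(n)^{+}B$.
\item Consistency requires that the entries of $C(n+1)$ produced this way agree with the Hankel-type moment structure. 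They must also respect the recursive relation $Y^2=X^3+aX+b$ applied at degree $2n+2$; the terms $a\beta_{2,2n-2}+b\beta_{1,2n-2}$ in \eqref{phi-ast} reflect this.
\item The first such constraint, comparing the entry $\langle X^2Y^{n-1},X^2Y^{n-1}\rangle$ in $C(n+1)$ with the recursively forced moment, should be exactly $\phi=\phi_\ast(\theta)$.
\item The next, the $(XY^n,X^2Y^{n-1})$ entry, gives $\psi=\psi_\ast(\theta,\phi)$.
\item The last independent one, the equality of the entries $\langle Y^{n+1},X^2Y^{n-1}\rangle$ and $\langle XY^n,XY^n\rangle$ (both equal to $\beta_{2,2n}$), gives $Q(\theta,\phi,\psi)=0$.
\end{itemize}

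The key steps, in order, are these. First, show that all other Hankel/recursive consistency conditions in $C(n+1)$ follow automatically from these three together with $p$--purity of $M(n)$. Second, conclude that a flat extension exists iff there is $\theta\in\mathbb{R}$ with $R(\theta)=0$, since $\phi,\psi$ are then determined. Positivity is automatic because flat extensions of $M(n)\succeq0$ built as $B^tM(n)^+B$ are psd. Third, deduce the uniqueness count from Proposition~\ref{quadratic-R-theta}: each real root $\theta$ gives exactly one flat extension, hence one measure. Distinct $\theta$ give distinct moments $\beta_{2,2n-1}$, hence distinct measures. Conversely, every $r$--atomic measure arises this way by the previous paragraph. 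So $\Delta=0$ gives one measure and $\Delta>0$ gives two.

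The main obstacle is the first of these steps: verifying that the many remaining entries of $C(n+1)$ impose no further conditions. Several entries of $C(n+1)$ correspond to the same moment (Hankel structure), and others are forced by recursion from degree-$(2n+2)$ moments. I would first use recursiveness to reduce every column of $M(n+1)$ to the three columns $X^2Y^{n-1}, XY^n, Y^{n+1}$ and known columns. Then I would check, via the self-adjointness of multiplication by $x$ and $y$ on $\mathcal C(M(n))$ in the spirit of \cite{CF96,Fia11}, that only the three listed equalities are genuinely new.
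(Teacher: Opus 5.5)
Your proposal follows the paper's proof essentially step by step. You get (1)$\Leftrightarrow$(2) from the Flat Extension Theorem. You fix every degree-$(2n+1)$ moment except $\theta,\phi,\psi$ by the recursion and force $C(n+1)$ through $[M(n)]_{\mathcal B}^{-1}$. You use the column relations to obtain moment structure everywhere except the three entries you single out, which give $\phi=\phi_\ast(\theta)$, $\psi=\psi_\ast(\theta,\phi)$ and $Q=0$, i.e.\ $R(\theta)=0$. Finally you count roots.

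One caveat applies equally to the paper's one-line count. Passing from $\Delta>0$ to ``exactly two measures'' tacitly needs $R_2\neq 0$. If $R_2=0\neq R_1$, then $R$ is nonzero and $\Delta=R_1^2>0$, but $R$ has only one root, so your root--measure correspondence yields one measure, not two. The case $R_2=0$ does occur, as in the paper's example where $R_2=R_1=0$.
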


\begin{remark}
\label{rem:smooth}
    By \cite[Theorem~7.2.2]{BBS+}, if $\mathcal Z(p)$ is connected, if the projective curve
    \[
    \{(x,y,z)\in\mathbb C^3:\ zy^2-x^3-axz^2-bz^3=0\}
    \]
    is smooth, and if the projective closure of $\mathcal Z(p)$ has exactly one real point at infinity, then
    conditions {(1)--(3)} in Theorem~\ref{FE-analysis} are equivalent to the existence of a
    $\mathcal Z(p)$--representing measure for $\beta$. In particular, this yields a complete constructive
    solution to the TMP in this setting. If, on the other hand, $\mathcal Z(p)$ has two connected
    components, then a representing measure may require $(\Rank M(n)+1)$ atoms; see \cite[Theorem~7.2.4]{BBS+}.
    We illustrate this situation with a numerical example in the next section; see Example~\ref{230809-1150}.
    We leave a complete constructive solution to the TMP on such curves $\mathcal Z(p)$ as an open problem.
    There are two natural approaches: (a) characterize the existence of a second \textit{flat} moment-matrix extension, or
    (b) extend simultaneously the two matrices identified in \cite{KZ} to positive semidefinite ones, one of which is \textit{flat}.
    Both directions appear to require a substantial and technically demanding analysis.
\end{remark}

\medskip
First we prove Proposition~\ref{quadratic-R-theta}.

\begin{proof}[Proof of Proposition~\ref{quadratic-R-theta}]
Write the compression of $M(n)$ to the basis $\mc B$ in block form as
\[
    [M(n)]_{\mc B}
    =
    \begin{pmatrix}
        M & x\\
        x^{t} & \beta_{0,2n}
    \end{pmatrix}.
\]
By \cite[p.~3144]{Fia11},
\[
    [M(n)]_{\mc B}^{-1}
    =
    \begin{pmatrix}
        P & v\\
        v^{t} & \varepsilon
    \end{pmatrix},
\]
where
\[
    \varepsilon := \bigl(\beta_{0,2n}-x^{t}M^{-1}x\bigr)^{-1},
    \qquad
    v := -\varepsilon M^{-1}x,
    \qquad
    P := M^{-1}\bigl(I+\varepsilon xx^{t}M^{-1}\bigr).
\]
Let $r_i$ denote the $i$th row of $P$. Define
\[
    [M(n)]_{\mc B}^{-1}[X^{2}Y^{n-1}]_{\mc B}
    =:
    \bigl(c_{1}(\theta),\ldots,c_{3n}(\theta)\bigr)^{t}
    \equiv c(\theta).
\]
Since $[X^{2}Y^{n-1}]_{\mc B}=(w^{t},\theta)^{t}$, where $w\in\RR^{3n-1}$ is fixed, we obtain
\[
    c_i(\theta)=
    \begin{cases}
        \langle r_i^{t},w\rangle + v_i\,\theta, & 1\le i\le 3n-1,\\[0.2em]
        \langle v,w\rangle + \varepsilon\,\theta, & i=3n.
    \end{cases}
\]
Using this representation, we compute
\begin{align}\label{LHS-1}
\begin{split}
\phi_{\ast}(\theta)
&=
\left\langle
    \begin{pmatrix} w\\ \theta \end{pmatrix},
    c(\theta)
\right\rangle
+ a\beta_{2,2n-2}+b\beta_{1,2n-2}\\
&=
\sum_{i=1}^{3n-1} w_i c_i(\theta)
+\theta\bigl(\langle v,w\rangle+\varepsilon\theta\bigr)
+ a\beta_{2,2n-2}+b\beta_{1,2n-2}\\
&=
\varepsilon\theta^{2}
+2\langle v,w\rangle\,\theta
+\sum_{i=1}^{3n-1}\langle r_i^{t},w\rangle w_i
+a\beta_{2,2n-2}+b\beta_{1,2n-2}\\
&=: f_2\theta^{2}+f_1\theta+f_0\in\R[\theta].
\end{split}
\end{align}

Next, partition $P$ as
\[
    P=
    \begin{pmatrix}
        Q & u\\
        u^{t} & \beta_{2,2n-2}
    \end{pmatrix},
    \qquad
    u=(u_i)_{i=1}^{3n-2},
\]
and denote by $s_i$ the $i$th row of $Q$. Define
\[
    [M(n)]_{\mc B}^{-1}[XY^{n}]_{\mc B}
    =:
    \bigl(d_{1}(\theta,\phi),\ldots,d_{3n}(\theta,\phi)\bigr)^{t}
    \equiv d(\theta,\phi).
\]
Since $[XY^{n}]_{\mc B}=(q^{t},\theta,\phi)^{t}$ with $q\in\R^{3n-2}$ fixed, we have
\[
d_i(\theta,\phi)=
\begin{cases}
    \langle s_i^{t},q\rangle + u_i\,\theta + v_i\,\phi, & 1\le i\le 3n-2,\\[0.2em]
    \langle u,q\rangle + \beta_{2,2n-2}\theta + v_{3n-1}\phi, & i=3n-1,\\[0.2em]
    \Big\langle v,\begin{pmatrix} q\\ \theta\end{pmatrix}\Big\rangle+\varepsilon\phi, & i=3n.
\end{cases}
\]

We now compute $\psi_{\ast}(\theta,\phi)$ using \eqref{phi-ast}. Using 
$[X^{2}Y^{n-1}]_{\mc B}=(w^{t},\theta)^{t}$ and
$c(\theta)=[M(n)]_{\mc B}^{-1}[X^{2}Y^{n-1}]_{\mc B}$,
\begin{align}\label{LHS-2}
\begin{split}
\psi_{\ast}(\theta,\phi)
&=
\Big\langle
    \begin{pmatrix} q\\ \theta\\ \phi \end{pmatrix},
    c(\theta)
\Big\rangle
+a\beta_{1,2n-1}+b\beta_{0,2n-1}\\
&=
\sum_{i=1}^{3n-2}q_ic_i(\theta)
+\theta c_{3n-1}(\theta)
+\phi c_{3n}(\theta)
+a\beta_{1,2n-1}+b\beta_{0,2n-1}\\
&=
\varepsilon\,\phi\theta + \langle v,w\rangle\,\phi
+v_{3n-1}\theta^{2}
+\Bigl(\langle r_{3n-1}^{t},w\rangle+\langle (v_1,\ldots,v_{3n-2})^{t},q\rangle\Bigr)\theta \\
&\hspace{2cm}
+\sum_{i=1}^{3n-2}q_i\langle r_i^{t},w\rangle
+a\beta_{1,2n-1}+b\beta_{0,2n-1}\\
&=: j_{11}\phi\theta+j_{10}\phi+j_{02}\theta^2+j_{01}\theta+j_{00}
\in\R[\theta,\phi].
\end{split}
\end{align}
Substituting $\phi=\phi_{\ast}(\theta)=f_2\theta^2+f_1\theta+f_0$ yields
\begin{align*}
\psi_{\ast}(\theta,\phi_{\ast}(\theta))
&=
j_{11}\theta\bigl(f_2\theta^2+f_1\theta+f_0\bigr)
+j_{10}\bigl(f_2\theta^2+f_1\theta+f_0\bigr)
+j_{02}\theta^2+j_{01}\theta+j_{00}\\
&=
j_{11}f_2\theta^3+
(j_{11}f_1+j_{10}f_2+j_{02})\theta^2
+(j_{11}f_0+j_{10}f_1+j_{01})\theta
+(j_{10}f_0+j_{00})\\
&=: j_3\theta^3+j_2\theta^2+j_1\theta+j_0.
\end{align*}

Using
$[Y^{n+1}]_{\mc B}=(p^{t},\theta,\phi,\psi)^{t}$ with $p\in\R^{3n-3}$ fixed, we obtain
\begin{align}\label{RHS-3}
\begin{split}
[Y^{n+1}]_{\mc B}^{t}[M(n)]_{\mc B}^{-1}[X^2Y^{n-1}]_{\mc B}
&=
\left\langle
    (p_1,\ldots,p_{3n-3},\theta,\phi,\psi)^{t},
    c(\theta)
\right\rangle \\
&=
\sum_{i=1}^{3n-3}p_ic_i(\theta)
+\theta c_{3n-2}(\theta)
+\phi c_{3n-1}(\theta)
+\psi c_{3n}(\theta)\\
&=
\varepsilon\,\psi\theta+\langle v,w\rangle\psi
+v_{3n-1}\phi\theta+\langle r_{3n-1}^{t},w\rangle\phi
+v_{3n-2}\theta^2 \\
&
+\Bigl(\langle r_{3n-2}^{t},w\rangle+\langle (v_1,\ldots,v_{3n-3})^{t},p\rangle\Bigr)\theta
+\sum_{i=1}^{3n-3}\langle r_i^{t},w\rangle p_i\\
&=: k(\theta,\phi,\psi)\in\R[\theta,\phi,\psi],
\end{split}
\end{align}
where
\[
k(\theta,\phi,\psi)
=
k_{101}\psi\theta+k_{100}\psi
+k_{011}\phi\theta+k_{010}\phi
+k_{002}\theta^2+k_{001}\theta+k_{000}.
\]
Substituting $\phi=\phi_{\ast}(\theta)$ and $\psi=\psi_{\ast}(\theta,\phi_{\ast}(\theta))$ gives
\begin{align*}
k(\theta,\phi_{\ast}(\theta),\psi_{\ast}(\theta,\phi_{\ast}(\theta)))
&=
k_4\theta^4+k_3\theta^3+k_2\theta^2+k_1\theta+k_0.
\end{align*}

Using
$[XY^{n}]_{\mc B}=(q^{t},\theta,\phi)^{t}$ and $d(\theta,\phi)=[M(n)]_{\mc B}^{-1}[XY^{n}]_{\mc B}$,
\begin{align}\label{LHS-3}
\begin{split}
[XY^{n}]_{\mc B}^{t}[M(n)]_{\mc B}^{-1}[XY^{n}]_{\mc B}
&=
\left\langle
    (q_1,\ldots,q_{3n-2},\theta,\phi)^{t},
    d(\theta,\phi)
\right\rangle \\
&=
\sum_{i=1}^{3n-2}q_id_i(\theta,\phi)
+\theta d_{3n-1}(\theta,\phi)
+\phi d_{3n}(\theta,\phi)\\
&=
\varepsilon\phi^{2}
+2v_{3n-1}\phi\theta
+2\langle (v_i)_{i=1}^{3n-2},q\rangle\,\phi
+\beta_{2,2n-2}\theta^2
\\
&\hspace{2cm}+2\langle u,q\rangle\theta
+\sum_{i=1}^{3n-2}\langle s_i^{t},q\rangle q_i \\
&=: \ell(\theta,\phi)\in\R[\theta,\phi],
\end{split}
\end{align}
where
\[
\ell(\theta,\phi)
=
\ell_{20}\phi^2+\ell_{11}\phi\theta+\ell_{10}\phi
+\ell_{02}\theta^2+\ell_{01}\theta+\ell_{00}.
\]
Substituting $\phi=\phi_{\ast}(\theta)$ yields
\begin{align*}
\ell(\theta,\phi_{\ast}(\theta))
&=
(\ell_{20}f_2^2)\theta^4
+\bigl(2\ell_{20}f_2f_1+\ell_{11}f_2\bigr)\theta^3
+
\left(\ell_{20}(2f_2f_0+f_1^2)+\ell_{11}f_1+\ell_{10}f_2+\ell_{02}
\right)\theta^2\\
&\hspace{1cm}
+\bigl(
2\ell_{20}f_{1}f_0
+
\ell_{11}f_0
+
\ell_{10}f_1+\ell_{01}
\bigr)\theta
+\bigl(\ell_{20}f_0^2+\ell_{10}f_0+\ell_{00}\bigr)\\
&=: \ell_4\theta^4+\ell_3\theta^3+\ell_2\theta^2+\ell_1\theta+\ell_0.
\end{align*}

Therefore,
\[
R(\theta)
=
k(\theta,\phi_{\ast}(\theta),\psi_{\ast}(\theta,\phi_{\ast}(\theta)))
-
\ell(\theta,\phi_{\ast}(\theta))
=
\sum_{i=0}^{4}(k_i-\ell_i)\theta^i.
\]
To complete the proof it suffices to show that $k_4-\ell_4=0$ and $k_3-\ell_3=0$.
A direct computation gives
\begin{align*}
k_4-\ell_4
&=
k_{101}j_3-\ell_{20}f_2^2
=
\varepsilon(j_{11}f_2)-\varepsilon\varepsilon^2
=\varepsilon^3-\varepsilon^3
=0,\\[0.4em]
k_3-\ell_3
&=
(k_{101}j_2+k_{100}j_3+k_{011}f_2)
-
(2\ell_{20}f_2f_1+\ell_{11}f_2)\\
&=
\Bigl(\varepsilon(j_{11}f_1+j_{10}f_2+j_{02})
+\langle v,w\rangle\,j_{11}f_2
+v_{3n-1}\varepsilon\Bigr)
-
\bigl(4\varepsilon^2\langle v,w\rangle+2v_{3n-1}\varepsilon\bigr)\\
&=
\bigl(4\varepsilon^2\langle v,w\rangle+2v_{3n-1}\varepsilon\bigr)
-
\bigl(4\varepsilon^2\langle v,w\rangle+2v_{3n-1}\varepsilon\bigr)
=0.
\end{align*}
Hence $R(\theta)$ is a polynomial of degree at most $2$, i.e., a quadratic polynomial in $\theta$.
\end{proof}

Now we are ready to prove Theorem~\ref{FE-analysis}.

\begin{proof}[Proof of Theorem~\ref{FE-analysis}]
The equivalence of \eqref{ft-theorem-pt1} and \eqref{ft-theorem-pt2} is exactly the Flat Extension Theorem
\cite[Theorem~7.10]{CF96}.

We now prove the equivalence of \eqref{ft-theorem-pt2} and \eqref{ft-theorem-pt3}. We begin by showing that there is a
three-parameter family of blocks $B(n+1)$ of the form \eqref{B-block} for which the matrix
\[
    \widetilde{M(n)}
    :=
    \begin{pmatrix}
        M(n) & B(n+1)
    \end{pmatrix}
\]
has moment structure, is recursively generated, and satisfies
\begin{equation}\label{230802-2355}
    \mathcal{C}\bigl(B(n+1)\bigr)\subseteq \mathcal{C}\bigl(M(n)\bigr).
\end{equation}
By recursive generation, $\widetilde{M(n)}$ must satisfy the column relations
\begin{equation}\label{230803-0220}
    X^{i+3}Y^{j}
    =
    X^{i}Y^{j+2}-aX^{i+1}Y^{j}-bX^{i}Y^{j},
    \qquad i,j\ge 0,\;\; i+j+3\le n.
\end{equation}
At the level of moments, these relations are equivalent to
\begin{equation}\label{230802-2332}
    \beta_{i+3,j}
    =
    \beta_{i,j+2}-a\beta_{i+1,j}-b\beta_{i,j},
    \qquad i,j\ge 0,\;\; i+j+3\le 2n+1.
\end{equation}
Since $M(n)$ is recursively generated, \eqref{230802-2332} already holds whenever $i+j+3\le 2n$. We therefore define the
new moments of degree $2n+1$ by
\[
    \beta_{i+3,\,2n-i-2}
    :=
    \beta_{i,\,2n-i}
    -a\beta_{i+1,\,2n-i-2}
    -b\beta_{i,\,2n-i-2},
    \qquad i=0,\ldots,2n-2.
\]
The remaining three moments of degree $2n+1$,
\[
    \beta_{2,2n-1},\qquad \beta_{1,2n},\qquad \beta_{0,2n+1},
\]
are free parameters; we denote them by $\theta,\phi,\psi$.

\medskip
\noindent\textbf{Claim 1.}
Fix $\theta,\phi,\psi$. Then \eqref{230802-2355} holds.

\smallskip
\noindent\textit{Proof of Claim 1.}
Since the moment relations \eqref{230802-2332} are equivalent to the column relations \eqref{230803-0220},
it follows that
\[
    X^{n+1},\,X^{n}Y,\,\ldots,\,X^{3}Y^{n-2}\in \mc C\big(M(n)\big).
\]
It therefore remains to show that the columns
\[
    X^{2}Y^{n-1},\qquad XY^{n},\qquad Y^{n+1}
\]
(of $B(n+1)$) belong to $\mc C\big(M(n)\big)$.

The columns indexed by $\mc B$ form a basis of $\mc C(M(n))$. Since $M(n)\succeq 0$ and
$\Rank M(n)=|\mc B|$, the compression $[M(n)]_{\mc B}$ is invertible.
Let $i_0\in\{0,1,2\}$ and set $C:=X^{i_0}Y^{n+1-i_0}$.
To prove $C\in \mc C(M(n))$ it suffices to verify that
\begin{equation}\label{230803-0010}
    C
    =
    [M(n)]_{\ast,\mc B}\,[M(n)]_{\mc B}^{-1}\,[C]_{\mc B},
\end{equation}
where $[M(n)]_{\ast,\mc B}$ denotes the restriction of $M(n)$ to the columns indexed by $\mc B$.
By construction, \eqref{230803-0010} holds for every row indexed by a monomial in $\mc B$.

Now let $X^{i}Y^{j}$ be a row indexed by a monomial not in $\mc B$, i.e., $i\ge 3$, $j\ge 0$,
and $i+j\le n$. We prove by induction on $i$ that
\begin{equation}\label{remains-to-prove}
    [C]_{X^{i}Y^{j}}
    =
    [M(n)]_{X^{i}Y^{j},\mc B}\,[M(n)]_{\mc B}^{-1}\,[C]_{\mc B}.
\end{equation}

\smallskip
\noindent\textit{Base step: $i=3$.}
Then $j\le n-3$, and by recursive generation,
\(
X^{3}Y^{j}=Y^{j+2}-aXY^{j}-bY^{j}
\)
holds in both $M(n)$ and in the column $C$. Hence
\begin{align*}
[C]_{X^{3}Y^{j}}
&=
[C]_{Y^{j+2}}-a[C]_{XY^{j}}-b[C]_{Y^{j}}\\
&=
\Bigl([M(n)]_{Y^{j+2},\mc B}-a[M(n)]_{XY^{j},\mc B}-b[M(n)]_{Y^{j},\mc B}\Bigr)
[M(n)]_{\mc B}^{-1}[C]_{\mc B}\\
&=
[M(n)]_{X^{3}Y^{j},\mc B}\,[M(n)]_{\mc B}^{-1}\,[C]_{\mc B},
\end{align*}
where the second equality uses \eqref{remains-to-prove} for the rows $Y^{j+2},XY^{j},Y^{j}\in\mc B$.

\smallskip
\noindent\textit{Induction step.}
Assume \eqref{remains-to-prove} holds for all rows $X^{i}Y^{j}$ with $i\le i_{1}$ and
$0\le j\le n-i$, where $i_1\ge 3$. Let $i=i_{1}+1$ and $0\le j\le n-i_{1}-1$.
By recursive generation,
\[
    X^{i_{1}+1}Y^{j}
    =
    X^{i_{1}-2}Y^{j+2}-aX^{i_{1}-1}Y^{j}-bX^{i_{1}-2}Y^{j}
\]
holds in $M(n)$ and in the column $C$, so
\begin{align*}
[C]_{X^{i_{1}+1}Y^{j}}
&=
[C]_{X^{i_{1}-2}Y^{j+2}}
-a[C]_{X^{i_{1}-1}Y^{j}}
-b[C]_{X^{i_{1}-2}Y^{j}}\\
&=
\Bigl([M(n)]_{X^{i_{1}-2}Y^{j+2},\mc B}
-a[M(n)]_{X^{i_{1}-1}Y^{j},\mc B}
-b[M(n)]_{X^{i_{1}-2}Y^{j},\mc B}\Bigr)
[M(n)]_{\mc B}^{-1}[C]_{\mc B}\\
&=
[M(n)]_{X^{i_{1}+1}Y^{j},\mc B}\,[M(n)]_{\mc B}^{-1}\,[C]_{\mc B},
\end{align*}
where the second equality follows from the induction hypothesis applied to the rows
$X^{i_{1}-2}Y^{j+2}$, $X^{i_{1}-1}Y^{j}$, and $X^{i_{1}-2}Y^{j}$.
This completes the induction and proves \eqref{remains-to-prove}, hence \eqref{230803-0010}.
Therefore $C\in\mc C(M(n))$, and Claim~1 follows.
\hfill$\blacksquare$
\medskip

In every moment matrix extension \eqref{moment-mat-ext-n+1} of $M(n)$, the lower-right $4\times 4$
submatrix of $C(n+1)$ has the form
\begin{equation}\label{230809-0922}
    \kbordermatrix{
    & X^3 Y^{n-2} & X^2Y^{n-1} & XY^n & Y^{n+1}\\
    X^3 Y^{n-2} &
        \beta_{6,2n-4} & \beta_{5,2n-3} & \beta_{4,2n-2} & \beta_{3,2n-1} \\
    X^{2}Y^{n-1}&
        \beta_{5,2n-3} & \beta_{4,2n-2} & \beta_{3,2n-1} & \beta_{2,2n}\\
    XY^n &
        \beta_{4,2n-2} & \beta_{3,2n-1} & \beta_{2,2n} & \beta_{1,2n+1}\\
    Y^{n+1} &
        \beta_{3,2n-1} & \beta_{2,2n} & \beta_{1,2n+1} & \beta_{0,2n+2}
    }.
\end{equation}

\medskip
\noindent\textbf{Claim 2.}
\[
    \beta_{4,2n-2}=\phi-a\beta_{2,2n-2}-b\beta_{1,2n-2},
    \qquad
    \beta_{3,2n-1}=\psi-a\beta_{1,2n-1}-b\beta_{0,2n-1}.
\]

\smallskip
\noindent\textit{Proof of Claim 2.}
In any recursively generated extension $M(n+1)$, the relations \eqref{230803-0220} must hold for every
choice of the parameters $\theta,\phi,\psi$.
In particular,
\[
    X^{3}Y^{n-2} = Y^{n} - aXY^{n-2} - bY^{n-2}\qquad\text{in }M(n+1),
\]
which implies
\begin{align}
\begin{split}\label{RHS-1}
 \beta_{4,2n-2}
 &=
 [XY^{n}]^t_{\mc B}\,[M(n)]_{\mc B}^{-1}\,[X^{3}Y^{n-2}]_{\mc B}\\
 &=
 [XY^{n}]^t_{\mc B}\,[M(n)]_{\mc B}^{-1}
\Bigl(
    [Y^{n}]_{\mc B}-a[XY^{n-2}]_{\mc B}-b[Y^{n-2}]_{\mc B}
\Bigr)\\[0.2em]
&=
\phi-a\beta_{2,2n-2}-b\beta_{1,2n-2},
\end{split}
\end{align}
and, similarly,
\begin{align}
\begin{split}\label{RHS-2}
\beta_{3,2n-1}
&=
[Y^{n+1}]^t_{\mc B}\,[M(n)]_{\mc B}^{-1}\,[X^{3}Y^{n-2}]_{\mc B}\\
&=
[Y^{n+1}]^t_{\mc B}\,[M(n)]_{\mc B}^{-1}
\Bigl(
    [Y^{n}]_{\mc B}-a[XY^{n-2}]_{\mc B}-b[Y^{n-2}]_{\mc B}
\Bigr)\\[0.2em]
&=
\psi-a\beta_{1,2n-1}-b\beta_{0,2n-1}.
\end{split}
\end{align}
This proves Claim~2.
\hfill$\blacksquare$

\medskip
Thus, in any extension $M(n+1)$ the entries $\beta_{4,2n-2}$ and $\beta_{3,2n-1}$ are determined once
$\phi$ and $\psi$ are fixed, respectively.
Fix $\theta,\phi,\psi$ and define
\begin{equation}\label{230809-0931}
    \widehat C
    :=
    B(n+1)^{t}\,[M(n)]_{\mc B}^{-1}\,[B(n+1)]_{\mc B}.
\end{equation}
Let
\begin{equation}\label{230809-1220}
    \widehat{M}
    =
    \begin{pmatrix}
        M(n) & B(n+1)\\
        B(n+1)^{t} & \widehat C
    \end{pmatrix}.
\end{equation}
By construction, the column relations \eqref{230803-0220} hold in $\widehat M$; hence $\widehat M$ has a
moment structure in all rows and columns $X^{i}Y^{j}$ with $i,j\ge 0$ and $i+j\le n$, and also in the
columns $X^{n+1},X^{n}Y,\ldots,X^{3}Y^{n-2}$.
It remains to analyze the moment structure in the rows and columns
$X^{2}Y^{n-1}$, $XY^{n}$, and $Y^{n+1}$.
The lower-right $4\times 4$ submatrix of $\widehat C$ has the form
\begin{equation}\label{230809-1218}
\kbordermatrix{
    & X^3Y^{n-2} & X^2 Y^{n-1} & XY^n & Y^{n+1}\\
X^{3}Y^{n-2} &
    \widehat C_{n-1,n-1} & \widehat C_{n,n-1} & \widehat C_{n+1,n-1} & \widehat C_{n+2,n-1}\\
X^2 Y^{n-1} &
    \widehat C_{n,n-1} & \widehat C_{n,n} & \widehat C_{n+1,n} & \widehat C_{n+2,n}\\
XY^n &
    \widehat C_{n+1,n-1} & \widehat C_{n+1,n} & \widehat C_{n+1,n+1} & \widehat C_{n+2,n+1}\\
Y^{n+1} &
   \widehat C_{n+2,n-1} & \widehat C_{n+2,n} & \widehat C_{n+2,n+1} & \widehat C_{n+2,n+2}
}.
\end{equation}

\medskip
\noindent\textbf{Claim 3.}
The matrix $\widehat M$ has a moment structure if and only if
\begin{align}\label{230809-0926}
\begin{split}
\widehat C_{n,n} &= \widehat C_{n+1,n-1},\\
\widehat C_{n+1,n} &= \widehat C_{n+2,n-1},\\
\widehat C_{n+1,n+1} &= \widehat C_{n+2,n},
\end{split}
\end{align}
or, equivalently, if and only if the following identities hold:
\begin{align}
    [X^{2}Y^{n-1}]^t_{\mc B}\,[M(n)]_{\mc B}^{-1}\,[X^{2}Y^{n-1}]_{\mc B}
 &=
    [XY^{n}]^t_{\mc B}\,[M(n)]_{\mc B}^{-1}\,[X^{3}Y^{n-2}]_{\mc B},
    \label{fialkow2.11}\\
    [XY^{n}]^t_{\mc B}\,[M(n)]_{\mc B}^{-1}\,[X^{2}Y^{n-1}]_{\mc B}
 &=
    [Y^{n+1}]^t_{\mc B}\,[M(n)]_{\mc B}^{-1}\,[X^{3}Y^{n-2}]_{\mc B},
\label{fialkow2.12}\\
    [XY^{n}]^t_{\mc B}\,[M(n)]_{\mc B}^{-1}\,[XY^{n}]_{\mc B}
 &=
    [Y^{n+1}]^t_{\mc B}\,[M(n)]_{\mc B}^{-1}\,[X^{2}Y^{n-1}]_{\mc B}.
\label{fialkow2.13}
\end{align}

\smallskip
\noindent\textit{Proof of Claim 3.}
The equalities in \eqref{230809-0926} are necessary for $\widehat M$ to have a moment structure, by
comparison with the universal form \eqref{230809-0922} of the corresponding $4\times4$ block
in any moment matrix extension.
The equivalence between \eqref{230809-0926} and \eqref{fialkow2.11}--\eqref{fialkow2.13}
follows directly from the definition \eqref{230809-0931} of $\widehat C$.
\hfill$\blacksquare$

\medskip
\noindent\textbf{Claim 4.}
The system \eqref{fialkow2.11}--\eqref{fialkow2.13} is equivalent to the existence of a real root of
the polynomial $R(\theta)$.

\smallskip
\noindent\textit{Proof of Claim 4.}
Using \eqref{RHS-1}, the identity \eqref{fialkow2.11} holds if and only if $\phi=\phi_{\ast}(\theta)$,
where $\phi_{\ast}(\theta)$ is defined in \eqref{phi-ast}.
Using \eqref{RHS-2}, the identity \eqref{fialkow2.12} holds if and only if $\psi=\psi_{\ast}(\theta,\phi)$,
where $\psi_{\ast}(\theta,\phi)$ is defined in \eqref{phi-ast}.
Finally, \eqref{fialkow2.13} holds if and only if $Q(\theta,\phi,\psi)=0$, where $Q$ is given by \eqref{phi-ast}.
Consequently, the system \eqref{fialkow2.11}--\eqref{fialkow2.13} is satisfied if and only if
$R(\theta)=0$ (cf.\ \eqref{def-R}) for some real $\theta$.
\hfill$\blacksquare$

\medskip
By Proposition~\ref{quadratic-R-theta}, the function $R(\theta)$ is a quadratic polynomial.
This establishes the equivalence of \eqref{ft-theorem-pt2} and \eqref{ft-theorem-pt3}.
The concluding statement follows by counting distinct real roots of the quadratic polynomial $R(\theta)$,
which is determined by the sign of its discriminant.
\end{proof}


\section{Numerical examples for the results from Section \ref{section-with-results}}
\label{section-with-examples}

The examples\footnote{The used tool for generating examples was Mathematica. The source code for the examples is available at  \url{https://github.com/ZalarA/TMP-Weierstrass}.} in this section concern sextic moment data
\[
\beta^{(6)}=\bigl(\beta_{ij}\bigr)_{i,j\in\mathbb{Z}_+,\; i+j\le 6},
\]
for which the associated moment matrix $M(3)$ (indexed by monomials of degree at most $3$ in degree--lexicographic order)
has the form
\[
\kbordermatrix{
& 1 & X & Y & X^2 & XY & Y^2 & X^3 & X^2Y & XY^2 & Y^3\\
1   & \beta_{00} & \beta_{10} & \beta_{01} & \beta_{20} & \beta_{11} & \beta_{02} & \beta_{30} & \beta_{21} & \beta_{12} & \beta_{03}\\
X   & \beta_{10} & \beta_{20} & \beta_{11} & \beta_{30} & \beta_{21} & \beta_{12} & \beta_{40} & \beta_{31} & \beta_{22} & \beta_{13}\\
Y   & \beta_{01} & \beta_{11} & \beta_{02} & \beta_{21} & \beta_{12} & \beta_{03} & \beta_{31} & \beta_{22} & \beta_{13} & \beta_{04}\\
X^2 & \beta_{20} & \beta_{30} & \beta_{21} & \beta_{40} & \beta_{31} & \beta_{22} & \beta_{50} & \beta_{41} & \beta_{32} & \beta_{23}\\
XY  & \beta_{11} & \beta_{21} & \beta_{12} & \beta_{31} & \beta_{22} & \beta_{13} & \beta_{41} & \beta_{32} & \beta_{23} & \beta_{14}\\
Y^2 & \beta_{02} & \beta_{12} & \beta_{03} & \beta_{22} & \beta_{13} & \beta_{04} & \beta_{32} & \beta_{23} & \beta_{14} & \beta_{05}\\
X^3 & \beta_{30} & \beta_{40} & \beta_{31} & \beta_{50} & \beta_{41} & \beta_{32} & \beta_{60} & \beta_{51} & \beta_{42} & \beta_{33}\\
X^2Y& \beta_{21} & \beta_{31} & \beta_{22} & \beta_{41} & \beta_{32} & \beta_{23} & \beta_{51} & \beta_{42} & \beta_{33} & \beta_{24}\\
XY^2& \beta_{12} & \beta_{22} & \beta_{13} & \beta_{32} & \beta_{23} & \beta_{14} & \beta_{42} & \beta_{33} & \beta_{24} & \beta_{15}\\
Y^3 & \beta_{03} & \beta_{13} & \beta_{04} & \beta_{23} & \beta_{14} & \beta_{05} & \beta_{33} & \beta_{24} & \beta_{15} & \beta_{06}
}.
\]

The following example shows that not every $p$--pure moment sequence admits a
$(\Rank M(n))$--atomic $\mathcal{Z}(p)$--representing measure.

\begin{example}\label{230809-1150}
Let
\[
a=-\frac{524287}{262144}
\qquad\text{and}\qquad
b=1.
\]
We consider the (unnormalized) moment matrix $M(3)$ generated by the $10$ atoms
\[
(x_i,y_i):=\left(\frac{1}{i},\sqrt{\frac{1}{i^{3}}+\frac{a}{i}+b}\right),
\qquad
(x_{5+i},y_{5+i}):=(x_i,-y_i),
\qquad i=1,\ldots,5.
\]
More explicitly,
\begin{tiny}
\begin{align*}
x_1 &= 1, \qquad
    y_1=\frac{1}{512},
&\qquad
x_3 &= \frac{1}{3}, \qquad
    y_3=\frac{\sqrt{\frac{262144}{3}}}{2560},
&\qquad
x_5 &= \frac{1}{5}, \qquad
    y_5=\frac{\sqrt{\frac{19922969}{5}}}{1024},\\[0.2em]
x_2 &= \frac{1}{2}, \qquad
    y_2=\frac{\sqrt{\frac{65537}{2}}}{512},
&\qquad
x_4 &= \frac{1}{4}, \qquad
    y_4=\frac{\sqrt{540673}}{1024}.
\end{align*}
\end{tiny}

Since all atoms lie on the curve $\mathcal{Z}(p)$, where
\[
p(x,y)=y^2-x^3-ax-b,
\]
it follows that $M(3)$ satisfies the column relation
\[
Y^{2}=X^{3}+aX+b\textit{1},
\]
and, by construction, $\beta$ admits a $\mathcal{Z}(p)$--representing measure.

The moments are:
\begin{tiny}
\begin{align*}
\beta_{00} &= 10,
&\beta_{10} &= \frac{137}{30},
&\beta_{20} &= \frac{5269}{1800},\\[0.2em]
\beta_{02} &= \frac{5729564777}{1769472000},
&\beta_{30} &= \frac{256103}{108000},
&\beta_{12} &= \frac{92678280949}{106168320000},\\[0.2em]
\beta_{40} &= \frac{14001361}{6480000},
&\beta_{22} &= \frac{1642891248263}{6370099200000},
&\beta_{04} &= \frac{9873214333730598229}{6262062317568000000},\\[0.2em]
\beta_{50} &=
        \frac{806108207}{388800000},
&\beta_{32} &=
        \frac{32309537807281}{382205952000000},
&\beta_{14} &=
        \frac{145734450995651475623}{375723739054080000000},\\[0.2em]
\beta_{60} &=
        \frac{47463376609}{23328000000},
&\beta_{42} &=
        \frac{705210728359247}{22932357120000000},
&\beta_{24} &=
        \frac{2279209309048894035601}{22543424343244800000000},\\[0.2em]
\beta_{06} &=
        \frac{18376199902549241303491562183}{22161087866383368192000000000}.
\end{align*}
\end{tiny}

A \textit{Mathematica} computation shows that
\[
R(\theta)=R_2\theta^{2}+R_1\theta+R_0
\]
satisfies $R_1=0$ and
\begin{tiny}
\begin{align*}
R_2
&=
\frac{2518293870123022495609405302939763563092225775041011300}
     {121617394571298435190879906936561845321520470769},
\qquad
&R_0
&=
\frac{1850617701610280004960481}
     {11427409289822154604482953216000}.
\end{align*}
\end{tiny}
In particular, $R(\theta)$ has no real root. Therefore, by Theorem~\ref{FE-analysis}, $\beta$ does not admit a
$(\Rank M(n))$--atomic $\mathcal{Z}(p)$--representing measure. 
\end{example}

\medskip

The motivation for the following example is to show that there is a polynomial $R(\theta)$
for the $(y^2-x^3)$--pure data with a positive semidefinite $M(3)$, which has only one real root and hence one flat extension.

\begin{example}
\label{240607-1046}
For the moments
\begin{tiny}
\begin{align*}
    \beta_{00} 
    & = 9,
    &\beta_{10} 
    &=\frac{1077749}{705600},\\[0.2em]
    \beta_{01} 
    & =\frac{78708473}{65856000} 
    &\beta_{20} 
    & = 
     \frac{538589354801}{497871360000},\\[0.2em]
     \beta_{11}
    & =
     \frac{144545256245731}{139403980800000},
     &\beta_{02} 
    & =\beta_{30}=\frac{357389058474664049}{351298031616000000},\\[0.2em]
    \beta_{21}
    &= \frac{99184670126682733619}{98363448852480000000},
    &
   \beta_{12}&
   = \beta_{40}=\frac{248886558707571775009601}{247875891108249600000000},\\
    \beta_{03}& =\beta_{31}=\frac{7727182467755471289426059}{7711694390034432000000000},
    &\beta_{22} & =\beta_{50}=\frac{175075181098169912564190119249}{174901228765980917760000000000},\\
    \beta_{13} & =\beta_{41} =\frac{48996545626484461837262019724819}{4897234405447465697280000000000
   0},
    &\beta_{04} &
    =\beta_{32}=\beta_{60}
    =\frac{123440676633749862544386180451074401}{123410307017276135571456000
   000000000},\\
    \beta_{23} &=\beta_{51}=
    \frac{34559126310550692454895975393101963331}{3455488596483731796000768
   0000000000000},
    &\beta_{14} &= \beta_{42} =
    \frac{87083646015637763331978612833793555534449}{8707831263139004125921
   9353600000000000000},\\[0.2em]
    \beta_{05}
    &= \beta_{33}=
    \frac{8127557778983392187744541237413631640158673}{81273091789297371841
   93806336000000000000000}
    &\beta_{24}
    &=
   \frac{61443396372281352421374004069586174620242179201}{6144245739270881
   3112505175900160000000000000000}
    \\[0.2em]
    \beta_{15}
    &=
    \frac{17204019459451797001978107903201762504044841332131}{1720388806995
   8467671501449252044800000000000000000}\\
    \beta_{06}
    &=\frac{43353963430456212574182304484730467470511653009909649}{4335379793
   6295338532183652115152896000000000000000000},
\end{align*}
\end{tiny}
a \textit{Mathematica} 
computation 
reveals that $M(3)$ is positive semidefinite and $(y^2-x^3)$--pure. Moreover,
\[
    R(\theta)=-\frac{(a-b\theta)^2}{c},
\]
where
\begin{tiny}
\begin{align*}
a&=12139086586077884004193854007024034872245209422802987219,\\
b&=12139063422162694789011422592242810880000000000000000000,\\
c&=90641965446620657513025511098298018324395010735907478615575494656\cdot 10^{36}.
\end{align*}
\end{tiny}
\noindent Therefore, $R(\theta)$ has a unique real root of multiplicity two, and $\beta$ has a unique
$\bigl(\Rank M(3)\bigr)$--atomic $\cZ(y^2-x^3)$--representing measure. Furthermore, by
\cite[Corollary~4.4]{Zal21}, $\beta$ admits a unique representing measure.
Indeed, the curve $y^2=x^3$ admits the parametrization $(x,y)=(t^2,t^3)$, $t\in\RR$.
Define a univariate sequence $\gamma=\{\gamma_k\}_{0\le k\le 18}$ by
$\gamma_{2i+3j}:=\widehat{\beta}_{ij}$, $(i,j\in\mathbb Z_+,\ i+j\le 6)$.
Note that $\gamma_1$ is not specified by this rule.
Observe that $\beta$ admits a representing measure supported on $y^2=x^3$ if and only if there exists
$\gamma_1\in\RR$ such that the sequence $\gamma$ admits a representing measure supported on $\RR$.
Since the Hankel matrix $\bigl(\gamma_{i+j}\bigr)_{i,j=1}^{9}$ is singular, it follows that $\gamma$
has a unique representing measure.
\end{example}


The motivation for the following example is to compute the polynomial $R(\theta)$
for the $(y^2-x^3)$--pure data with a positive semidefinite $M(3)$, which do not come from a measure.

\begin{example}
\label{230809-2031}
For the moments
\begin{tiny}
\begin{align*}
    \beta_{00} 
    & = 8,
    &\beta_{10} 
    &=\frac{372149}{705600},\\[0.2em]
    \beta_{01} 
    & =\frac{12852473}{65856000}
    &\beta_{20} 
    & = 
     \frac{40717994801}{497871360000},\\[0.2em]
     \beta_{11}
    & =
     \frac{5141275445731}{139403980800000},
     &\beta_{02} 
    & =\beta_{30}=\frac{6091026858664049}{351298031616000000},\\[0.2em]
    \beta_{21}
    &= \frac{821221274202733619}{98363448852480000000},
   &\beta_{12}&
   = \beta_{40}=\frac{1010667599322175009601}{247875891108249600000000},\\[0.2em]
    \beta_{03}& =\beta_{31}=\frac{15488077721039289426059}{7711694390034432000000000},
    &\beta_{22} & =\beta_{50}=\frac{173952332188994804190119249}{174901228765980917760000000000},
    \\[0.2em]
    \beta_{13} & =\beta_{41} =\frac{24201572009804864462019724819}{48972344054474656972800000000000},
    &\beta_{04} &
    =\beta_{32}=\beta_{60}
    =\frac{30369616473726972930180451074401}{123410307017276135571456000000000000},
    \\[0.2em]
    \beta_{23} &=\beta_{51}=
    \frac{4240345713374494888295393101963331}{
34554885964837317960007680000000000000},
    &\beta_{14} &= \beta_{42} =
    \frac{5333384247722072759259233793555534449}{
87078312631390041259219353600000000000000},
    \\[0.2em]
    \beta_{05}
    &= \beta_{33}=
    \frac{248600053655003550734901413631640158673}
{8127309178929737184193806336000000000000000}
    &\beta_{24}
    &=
    \frac{938979572539308868828169426174620242179201}{
61442457392708813112505175900160000000000000000}
    \\[0.2em]
    \beta_{15}
    &=
    \frac{131389493329330476658651156962504044841332131}{
17203888069958467671501449252044800000000000000000},\\
    \beta_{06}
    &=\frac{4335545287790407895217017581092861070511653009909649}{
43353797936295338532183652115152896000000000000000000
},
\end{align*}
\end{tiny}
a \textit{Mathematica} computation 
reveals the following. First, $M(3)$ is positive semidefinite and $(y^2-x^3)$--pure. The polynomial
$
R(\theta)=R_2\theta^{2}+R_1\theta+R_0
$
satisfies $R_2=R_1=0$ and $R_0=-16257024$. Hence $R(\theta)$ is a negative constant, and therefore
no $(\Rank M(3))$--atomic representing measure for $\beta$ exists.

Moreover, $\beta$ admits no representing measure. This follows from the solution of the truncated moment
problem supported on $y^2=x^3$; see \cite[Corollary~4.3]{Zal21}. Namely, as in
Example~\ref{240607-1046}, let $\gamma=\{\gamma_k\}_{0\le k\le 18}$ denote the associated univariate
sequence. By \cite[Corollary~4.4]{Zal21}, in the $(y^2-x^3)$--pure case, $\beta$ admits a representing
measure if and only if the Hankel matrix $\bigl(\gamma_{i+j}\bigr)_{i,j=1}^{8}$ is positive definite and
$\bigl(\gamma_{i+j}\bigr)_{i,j=1}^{9}$ is positive semidefinite. In the present situation,
$\bigl(\gamma_{i+j}\bigr)_{i,j=1}^{8}$ is singular, and consequently no representing measure exists for
$\gamma$, and hence neither for $\beta$.
\hfill$\blacksquare$
\end{example}


\section{Constructive solution to the symmetric $\cZ(p)$--TMP}
\label{symmetric}

Let $n\ge 3$, let
$
    \beta:=\{\beta_{ij}\}_{i,j\in\mathbb{Z}_+,\; i+j\le 2n}
$
be a bivariate moment sequence of degree $2n$.
Assume that the moment matrix $M(n)$ is positive semidefinite and $p$--pure, where 
    $$p(x,y)=y^2-x^3-ax-b, \quad a,b\in\mathbb{R}.$$
In addition, suppose that
\[
    \beta_{ij}=0 \quad\text{whenever } j \text{ is odd}.
\]
We call such data \textbf{symmetric} and refer to the corresponding truncated moment problem as the
\textbf{symmetric TMP}. In this section we give a constructive solution to the symmetric
$\mathcal{Z}(p)$--TMP.\\

The following proposition reduces the symmetric $\mathcal Z(p)$--TMP to a univariate TMP.
\medskip

\begin{proposition}\label{prop:one_step_reduction}
Let $n\ge 3$ and let $\beta=\{\beta_{ij}\}_{i,j\in\mathbb Z_+,\, i+j\le 2n}$ be a bivariate truncated
moment sequence of degree $2n$ satisfying the symmetry condition
$\beta_{ij}=0$ 
whenever $j$ is odd.
Let
\[
p(x,y)=y^2-x^3-ax-b,\qquad a,b\in\mathbb R,
\]
and denote by $\pr_1:\mathbb R^2\to\mathbb R$ the first coordinate projection, $\pr_1(x,y):=x$.

\smallskip
Define the following:

\smallskip
\begin{enumerate}
\item
Let $\widetilde\beta=\{\widetilde\beta_{ij}\}_{i,j\in\mathbb Z_+,\, i+2j\le 2n}$ be given by
\[
\widetilde\beta_{ij}:=\beta_{i,2j}.
\]
Set
\[
\widetilde K:=\{(x,z)\in\mathbb R\times[0,\infty):\ z=x^3+ax+b\}.
\]

\item
Let $\phi:\mathbb R^2\to\mathbb R^2$ be defined by
\[
\phi(x,z)=(x,z-ax-b),
\]
and let $\widehat\beta:=\phi(\widetilde\beta)$ (see \eqref{affine linear-trans}). Then
\[
\phi(\widetilde K)=\{(x,x^3):\ x\in \pr_1(\widetilde K)\}.
\]

\item
Let $\gamma=\{\gamma_t\}_{0\le t\le 3n}$ be the univariate truncated sequence defined by
\[
\gamma_{i+3j}:=\widehat\beta_{ij}
\qquad (i,j\in\mathbb Z_+,\ i+2j\le 2n).
\]

\item
Let $x_1$ be the smallest real root of $x^3+ax+b$, and define $\widetilde\phi:\mathbb R\to\mathbb R$
by $\widetilde\phi(x)=x-x_1$. Set $\widetilde\gamma:=\widetilde\phi(\gamma)$ and define
\[
E
:=\widetilde\phi\big(\pr_1(\phi(\widetilde K))\big)
=\widetilde\phi\big(\pr_1(\widetilde K)\big).
\]
Equivalently,
\begin{equation}
\label{def:E}
E=
\begin{cases}
[0,\infty), & \text{if $x^3+ax+b$ has exactly one real root $x_1$},\\[2mm]
[0,x_2-x_1]\cup[x_3-x_1,\infty),
& \text{if $x^3+ax+b$ has three real roots $x_1<x_2<x_3$}.
\end{cases}
\end{equation}
\end{enumerate}

\smallskip
Then the following are equivalent:
\begin{enumerate}
\item[(i)] $\beta$ admits a representing measure supported on
\[
\mathcal Z(p)=\{(x,y)\in\mathbb R^2:\ p(x,y)=0\}.
\]
\item[(ii)] $\widetilde\gamma$ admits a representing measure supported on $E$.
\end{enumerate}

\smallskip
Moreover, given a representing measure for any one of the sequences above, one can construct representing
measures for all the others via the explicit transformations described in {(1)--(4)}.
\end{proposition}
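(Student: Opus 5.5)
The plan is to prove (i)$\Leftrightarrow$(ii) by a chain of three equivalences, one for each transformation in (1)--(4). Each step should carry an explicit measure-transfer recipe, which gives the ``moreover'' part at the same time.

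\emph{Step 1: $\beta$ versus $\widetilde\beta$ on $\widetilde K$.} Given a $\mathcal Z(p)$--representing measure $\mu$ for $\beta$, push it forward under the map $(x,y)\mapsto(x,y^2)$. Since $y^2=x^3+ax+b\ge 0$ on $\mathcal Z(p)$, the image lies in $\widetilde K$. Moreover
\[
\int x^i z^j\,d\nu=\int x^iy^{2j}\,d\mu=\beta_{i,2j}=\widetilde\beta_{ij}.
\]
Conversely, given a $\widetilde K$--representing measure $\nu=\sum_k \rho_k\delta_{(x_k,z_k)}$ for $\widetilde\beta$, we symmetrize by splitting each atom. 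Put mass $\rho_k/2$ at each of $(x_k,\pm\sqrt{z_k})$, merging the two points when $z_k=0$; for a general measure we use the analogous symmetric lift. The resulting $\mu$ is supported on $\mathcal Z(p)$. Its moments with odd $j$ vanish by symmetry, which matches the hypothesis $\beta_{ij}=0$ for odd $j$. Its moments with even $j$ equal $\widetilde\beta_{i,j/2}$. The index ranges agree, since $i+j\le 2n$ with $j=2j'$ is exactly $i+2j'\le 2n$. Note that the degree bookkeeping is in the weighted sense $i+2j\le 2n$. So Proposition~\ref{251021-2254} cannot be quoted verbatim in the next step; I will verify the transformation directly there.

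\emph{Step 2: $\widetilde\beta$ versus $\widehat\beta$.} The map $\phi(x,z)=(x,z-ax-b)$ is an invertible affine map. Pushing a measure forward by $\phi$ transforms moments by $\widehat\beta_{ij}=L_{\widetilde\beta}(x^i(z-ax-b)^j)$. Expanding $(z-ax-b)^j$ yields only monomials $x^{i'}z^{j'}$ with $i'+2j'\le i+j+\ldots$; more precisely, with weighted degree at most $i+2j$ once $z$ has weight $2$. The same holds for $\phi^{-1}$. Hence the weighted-degree truncation $i+2j\le 2n$ is preserved, and the equivalence holds with support $\widetilde K\leftrightarrow\phi(\widetilde K)=\{(x,x^3):x\in\pr_1(\widetilde K)\}$. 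This is the one point where care is needed: the weighted (rather than total) degree compatibility must be checked. It works because the substitution $z\mapsto z-ax-b$ has weighted degree $\le 2$.

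\emph{Step 3: $\widehat\beta$ versus $\gamma$ on $\pr_1(\widetilde K)$, then $\widetilde\gamma$ on $E$.} A measure on the graph $\{(x,x^3)\}$ is the same as a measure $\lambda$ on $\pr_1(\widetilde K)$, via $\pr_1$ and its inverse $x\mapsto(x,x^3)$. Under this correspondence $\int x^i z^j = \int x^{i+3j}\,d\lambda$. So $\widehat\beta$ has such a representing measure iff $\widehat\beta_{ij}$ depends only on $i+3j$ and the resulting $\gamma$ has a representing measure on $\pr_1(\widetilde K)$.

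The main obstacle is \emph{well-definedness} of $\gamma$. Distinct pairs $(i,j)$ with equal $i+3j$ must give equal $\widehat\beta_{ij}$. I would derive this from $p$--purity and recursiveness of $M(n)$. The relation $Y^2=X^3+aX+b$ yields $\beta_{i,j+2}=\beta_{i+3,j}+a\beta_{i+1,j}+b\beta_{i,j}$ for all admissible indices. Translated through Steps 1--2, this says $\widehat\beta_{i,j+1}=\widehat\beta_{i+3,j}$ whenever both indices are in range $i+3+2j\le 2n$. This is exactly the needed consistency. I must check that every $t\le 3n$ is reached and that all coincidences are covered, which is a routine index count.

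Finally, translation by $\widetilde\phi(x)=x-x_1$ is a univariate affine change. It preserves representability, with support $\pr_1(\widetilde K)\mapsto E$, by the one-variable analogue of Proposition~\ref{251021-2254}. The description \eqref{def:E} follows from the sign pattern of the cubic $x^3+ax+b$.
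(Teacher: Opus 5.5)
Your proposal is correct and follows essentially the same chain as the paper's proof: $\beta\leftrightarrow\widetilde\beta$ via push-forward under $(x,y)\mapsto(x,y^2)$ and symmetric lifting, $\widetilde\beta\leftrightarrow\widehat\beta$ via the affine map $\phi$, $\widehat\beta\leftrightarrow\gamma$ via the graph $u=x^3$, and finally the translation $\widetilde\phi$. The paper passes over two points that you make explicit: that $\phi$ respects the weighted truncation $i+2j\le 2n$, so Proposition~\ref{251021-2254} cannot be quoted verbatim, and that $\gamma$ is well defined because $L_{\beta}(qp)=0$ for $\deg q\le 2n-3$ by recursiveness of the $p$--pure $M(n)$; your version is therefore, if anything, more complete.
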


\begin{proof}
We prove (i)$\Leftrightarrow$(ii) by exhibiting a chain of equivalences
$
\text{(i)}\Leftrightarrow\text{(iii)}\Leftrightarrow\text{(iv)}\Leftrightarrow\text{(v)}\Leftrightarrow\text{(ii)},
$
where:
\begin{enumerate}
\item[(iii)] $\widetilde\beta$ admits a $\widetilde K$--representing measure;
\item[(iv)] $\widehat\beta$ admits a $\phi(\widetilde K)$--representing measure;
\item[(v)] $\gamma$ admits a $\pr_1(\phi(\widetilde K))$--representing measure.
\end{enumerate}

By Richter's theorem \cite{Ric}, it suffices to consider finitely atomic representing measures in all poinst (i)-(v).

\smallskip\noindent
\textit{(i)$\Leftrightarrow$(iii).}
Apply the substitution $z=y^2$. If $\mu$ is a representing measure for $\beta$ supported on
$\mathcal Z(p)$, then its push-forward under $(x,y)\mapsto(x,y^2)$ is supported on $\widetilde K$ and
represents $\widetilde\beta$, since $\widetilde\beta_{ij}=\beta_{i,2j}$.
Conversely, given a representing measure $\widetilde\mu$ for $\widetilde\beta$ supported on
$\widetilde K$, one lifts it to a measure on $\mathcal Z(p)$ by replacing each atom $(x_r,z_r)$ with the
pair $(x_r,\pm\sqrt{z_r})$, each carrying half of the mass. This lifting reproduces all even $y$--moments by construction, while the odd
$y$--moments vanish; the symmetry assumption $\beta_{ij}=0$ for odd $j$ is precisely what ensures that
the lifted measure matches $\beta$.

\smallskip\noindent
\textit{(iii)$\Leftrightarrow$(iv).}
This follows from the invariance of the truncated moment problem under affine changes of variables (see Proposition \ref{251021-2254}):
push-forward and pull-back via the affine map $\phi(x,z)=(x,z-ax-b)$ transport representing measures
between $\widetilde\beta$ on $\widetilde K$ and $\widehat\beta=\phi(\widetilde\beta)$ on
$\phi(\widetilde K)$.

\smallskip\noindent
\textit{(iv)$\Leftrightarrow$(v).}
On $\phi(\widetilde K)$ the second coordinate satisfies $u=x^3$. Hence each monomial $x^i u^j$
restricts to $x^{i+3j}$, and the bivariate moments $\widehat\beta_{ij}$ coincide with the univariate
moments $\gamma_{i+3j}$ on the projected set $\pr_1(\phi(\widetilde K))$.

\smallskip\noindent
\textit{(v)$\Leftrightarrow$(ii).}
Finally, apply the univariate affine change of variables $\widetilde\phi(x)=x-x_1$ from item~(4):
push-forward by $\widetilde\phi$ maps representing measures for $\gamma$ supported on
$\pr_1(\phi(\widetilde K))$ onto representing measures for $\widetilde\gamma=\widetilde\phi(\gamma)$
supported on $E=\widetilde\phi(\pr_1(\phi(\widetilde K)))$, and conversely.

\smallskip
Combining these equivalences yields (i)$\Leftrightarrow$(ii). Moreover, the constructions above are
explicit at each step, so composing them provides representing measures for all intermediate sequences.
\end{proof}

\medskip
Thus, to solve the symmetric $\mathcal{Z}(p)$--TMP explicitly, it suffices to solve the univariate TMP on
\[
[0,\infty)
\quad\text{and on}\quad
[0,c]\cup[d,\infty)\quad\text{for some }0<c<d.
\]
The case $[0,\infty)$ is treated in \cite{CF91}. The case $[0,c]\cup[d,\infty)$ can be handled using the
truncated Riesz--Haviland theorem \cite{CF08} together with the description of nonnegative polynomials on
$[0,c]\cup[d,\infty)$ from \cite{KMS05}. Before stating the results, we introduce additional notation.

\bigskip

Let $m,n\in\mathbb{N}$ with $m\le \frac{n}{2}$, and let $\gamma=\{\gamma_t\}_{0\le t\le n}$ be a real univariate
truncated sequence. Define the Hankel matrix and the associated column vector by
\[
    H_m(\gamma):=\bigl(\gamma_{i+j}\bigr)_{i,j=0}^{m},
    \qquad
    h_m(\gamma):=
    \begin{pmatrix}
        \gamma_{m}\\ \gamma_{m+1}\\ \vdots\\ \gamma_{2m-1}
    \end{pmatrix}.
\]
Let $\mathcal{I}$ and $\mathcal{E}$ denote the identity and shift operators, respectively:
\[
    \mathcal{I}\gamma:=(\gamma_0,\ldots,\gamma_n),
    \qquad
    \mathcal{E}\gamma:=(\gamma_1,\ldots,\gamma_n).
\]
If $p(x)=a_0+a_1x+\cdots+a_kx^k$ with $k\le n$, define (by a mild abuse of notation) the linear operator
\[
    p(\mathcal{E})
    :=a_0\mathcal{I}+a_1\mathcal{E}+\cdots+a_k\mathcal{E}^k
    :\mathbb{R}^{n+1}\to \mathbb{R}^{n+1-k},
\]
where each summand is truncated to its first $n+1-k$ coordinates and $\mathcal{E}^k$ denotes $k$ successive shifts.

\medskip
Fix $c,d\in\mathbb{R}$ with $0<c<d$. We introduce the following (localizing) Hankel matrices:
\begin{align*}
    \uH_{2m}(\gamma)
    &:=H_m(\gamma),\\[0.6em]
    \uH_{2m+1}^{[0,\infty)}(\gamma)
    &:=H_m(\mathcal{E}\gamma),\\[0.6em]
    \uH_{2m}^{(-\infty,c]\cup[d,\infty)}(\gamma)
    &:=H_{m-1}\!\bigl((\mathcal{E}-c\mathcal{I})(\mathcal{E}-d\mathcal{I})\gamma\bigr)
     =H_{m-1}\!\bigl((cd\mathcal{I}-(c+d)\mathcal{E}+\mathcal{E}^2)\gamma\bigr),\\[0.6em]
    \uH_{2m+1}^{[0,c]\cup[d,\infty)}(\gamma)
    &:=H_{m-1}\!\bigl(\mathcal{E}(\mathcal{E}-c\mathcal{I})(\mathcal{E}-d\mathcal{I})\gamma\bigr)
     =H_{m-1}\!\bigl((cd\mathcal{E}-(c+d)\mathcal{E}^2+\mathcal{E}^3)\gamma\bigr).
\end{align*}

\begin{remark}
The index $r$ in $\uH_r$ indicates the largest index for which $\gamma_r$ appears in the definition of the
corresponding (localizing) Hankel matrix.
\end{remark}

These matrices admit the block decompositions
\begin{align*}
    \uH_{2m}(\gamma)
    &=
    \begin{pmatrix}
        \uH_{2m-2}(\gamma) & \uh_{2m-1}(\gamma)\\[0.4em]
        \uh_{2m-1}(\gamma)^{T} & \gamma_{2m}
    \end{pmatrix},\\[0.6em]
    \uH_{2m+1}^{[0,\infty)}(\gamma)
    &=
    \begin{pmatrix}
        \uH_{2m-1}^{[0,\infty)}(\gamma) & \uh_{2m}^{[0,\infty)}(\gamma)\\[0.4em]
        \uh_{2m}^{[0,\infty)}(\gamma)^{T} & \gamma_{2m+1}
    \end{pmatrix},\\[0.6em]
    \uH_{2m}^{(-\infty,c]\cup[d,\infty)}(\gamma)
    &=
    \begin{pmatrix}
        \uH_{2m-2}^{(-\infty,c]\cup[d,\infty)}(\gamma) & \uh_{2m-1}^{(-\infty,c]\cup[d,\infty)}(\gamma)\\[0.4em]
        \uh_{2m-1}^{(-\infty,c]\cup[d,\infty)}(\gamma)^{T}
        & cd\gamma_{2m-2}-(c+d)\gamma_{2m-1}+\gamma_{2m}
    \end{pmatrix},\\[0.6em]
    \uH_{2m+1}^{[0,c]\cup[d,\infty)}(\gamma)
    &=
    \begin{pmatrix}
        \uH_{2m-1}^{[0,c]\cup[d,\infty)}(\gamma) & \uh_{2m}^{[0,c]\cup[d,\infty)}(\gamma)\\[0.4em]
        \uh_{2m}^{[0,c]\cup[d,\infty)}(\gamma)^{T}
        & cd\gamma_{2m-1}-(c+d)\gamma_{2m}+\gamma_{2m+1}
    \end{pmatrix},
\end{align*}
where
\begin{align*}
    \uh_{2m-1}(\gamma)
    &=h_m(\gamma),\\[0.4em]
    \uh_{2m}^{[0,\infty)}(\gamma)
    &=h_m(\mathcal{E}\gamma),\\[0.4em]
    \uh_{2m-1}^{(-\infty,c]\cup[d,\infty)}(\gamma)
    &=h_{m-1}\!\bigl((\mathcal{E}-c\mathcal{I})(\mathcal{E}-d\mathcal{I})\gamma\bigr),\\[0.4em]
    \uh_{2m}^{[0,c]\cup[d,\infty)}(\gamma)
    &=h_{m-1}\!\bigl(\mathcal{E}(\mathcal{E}-c\mathcal{I})(\mathcal{E}-d\mathcal{I})\gamma\bigr).
\end{align*}

Let $\gamma \equiv \gamma^{(2k)} := \{\gamma_t\}_{t=0}^{2k}$ be a univariate truncated sequence.
Assume that the Hankel matrix
$H_k(\gamma)$ is positive semidefinite. 
Following \cite{CF91} (see also \cite[Proposition~2.2]{CF91}), the \textbf{rank} of $\gamma$, denoted by $\Rank \gamma$,
is defined as
\[
\Rank \gamma :=
\begin{cases}
k+1, 
& \text{if } H_k(\gamma) \text{ is nonsingular},\\[2mm]
\min\bigl\{\, i \in \mathbb{N}\cup\{0\} \;:\; H_i(\gamma) \text{ is singular}\,\bigr\},
& \text{otherwise}.
\end{cases}
\]

A sequence $\gamma \equiv \gamma^{(2k)}$ with $r := \Rank \gamma$ is called
\textbf{positively recursively generated} if $H_{r-1}(\gamma)\succ 0$ and, upon defining
\[
(\varphi_0,\ldots,\varphi_{r-1})^{T}
:= H_{r}(\gamma)^{-1}\,(\gamma_r,\ldots,\gamma_{2r-1})^{T},
\]
the recursion
\begin{equation}\label{recursion}
\gamma_j
=\varphi_0 \gamma_{j-r} + \cdots + \varphi_{r-1}\gamma_{j-1},
\qquad j=r,\ldots,2k,
\end{equation}
holds.

A sequence $\gamma^{(2k+1)}$ is \textbf{positively recursively generated} if, for
$r:=\Rank \gamma^{(2k)}$, one has $H_{r-1}(\gamma^{(2k)})\succ 0$ and the recursion
\eqref{recursion} holds also for $j=2k+1$.

\subsection{Case 1: In \eqref{def:E}, $E=[0,\infty)$}
Let $\widetilde\gamma=\{\widetilde\gamma_t\}_{t\le 3n}$ 
be as in Proposition \ref{prop:one_step_reduction}. The solution to the $E$--TMP for $\widetilde \gamma$ is the following. 

\begin{theorem}[{\cite[Theorems~5.1, 5.3]{CF91}}]
Let $\widetilde\gamma=\{\widetilde\gamma_t\}_{t\le 3n}$.
\begin{enumerate}
\setlength{\itemsep}{10pt}
\item If $n$ is even, then the following are equivalent:
\begin{enumerate}
\setlength{\itemsep}{5pt}
\item $\widetilde\gamma$ admits a $[0,\infty)$--representing measure.
\item $\widetilde\gamma$ admits a $[0,\infty)$--representing $(\Rank \widetilde\gamma)$--atomic measure.
\item $\uH_{3n}(\widetilde\gamma)\succeq 0$, $\uH^{[0,\infty)}_{3n-1}(\widetilde\gamma)\succeq 0$, and
\(
\uh_{3n}^{[0,\infty)}(\widetilde\gamma)\in \cC\big(\uH^{[0,\infty)}_{3n-1}(\widetilde\gamma)\big).
\)
\end{enumerate}

\item If $n$ is odd, then the following are equivalent:
\begin{enumerate}
\setlength{\itemsep}{5pt}
\item $\widetilde\gamma$ admits a $[0,\infty)$--representing measure.
\item $\widetilde\gamma$ admits a $[0,\infty)$--representing $(\Rank \widetilde\gamma^{(3n-1)})$--atomic measure.
\item $\uH_{3n-1}(\widetilde\gamma)\succeq 0$, $\uH^{[0,\infty)}_{3n}(\widetilde\gamma)\succeq 0$, and
\(
\uh_{3n}(\widetilde\gamma)\in \cC\big(\uH_{3n-1}(\widetilde\gamma)\big).
\)
\end{enumerate}
\end{enumerate}

Moreover, in the even case $\Rank \widetilde\gamma\le \frac{3n}{2}+1$, while in the odd case
$\Rank \widetilde\gamma\le \frac{3n+1}{2}$.
Consequently, we obtain a representing measure with at most $(3n+2)$ atoms for $\beta$ from Proposition \ref{prop:one_step_reduction} when $n$ is even
and at most $(3n+1)$ atoms when $n$ is odd.
\end{theorem}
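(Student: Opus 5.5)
The equivalences (a)$\Leftrightarrow$(b)$\Leftrightarrow$(c) will be read off from the Curto--Fialkow solution of the Stieltjes TMP \cite[Theorems~5.1, 5.3]{CF91}. The work is in matching indices: $\widetilde\gamma=\{\widetilde\gamma_t\}_{t\le 3n}$ has even length when $n$ is even and odd length when $n$ is odd, and the Hankel and localizing Hankel matrices of the present notation must be identified with those of \cite{CF91}. The atom bound for $\beta$ then follows by pushing the measure back through the chain of Proposition~\ref{prop:one_step_reduction}.

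\emph{Even case, $3n=2m$.} The Stieltjes conditions are $H_m(\widetilde\gamma)=\uH_{3n}(\widetilde\gamma)\succeq0$ and $H_{m-1}(\mathcal E\widetilde\gamma)=\uH^{[0,\infty)}_{3n-1}(\widetilde\gamma)\succeq0$. We also need the column condition $(\widetilde\gamma_{m+1},\dots,\widetilde\gamma_{2m})^T\in\cC(H_{m-1}(\mathcal E\widetilde\gamma))$, which is precisely $\uh^{[0,\infty)}_{3n}(\widetilde\gamma)\in\cC(\uH^{[0,\infty)}_{3n-1}(\widetilde\gamma))$. I would check this dictionary entry by entry against \cite[Theorem~5.3]{CF91}. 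That theorem also supplies a $(\Rank\widetilde\gamma)$--atomic representing measure, obtained from the flat extension of the Hankel matrix, so (a)$\Rightarrow$(b) is part of the cited result, while (b)$\Rightarrow$(a) is trivial.

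\emph{Odd case, $3n=2m+1$.} The conditions become $H_m(\widetilde\gamma)=\uH_{3n-1}(\widetilde\gamma)\succeq0$ and $H_m(\mathcal E\widetilde\gamma)=\uH^{[0,\infty)}_{3n}(\widetilde\gamma)\succeq0$, together with $(\widetilde\gamma_{m+1},\dots,\widetilde\gamma_{2m+1})^T\in\cC(H_m(\widetilde\gamma))$, i.e. $\uh_{3n}(\widetilde\gamma)\in\cC(\uH_{3n-1}(\widetilde\gamma))$. Here the measure has $\Rank\widetilde\gamma^{(3n-1)}$ atoms, again by \cite[Theorem~5.1]{CF91}.

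\emph{Rank bounds and atom count.} In the even case $\Rank\widetilde\gamma\le m+1=\frac{3n}{2}+1$, since $H_m$ is $(m+1)\times(m+1)$. In the odd case the rank is at most the size of $H_m(\widetilde\gamma^{(3n-1)})$, namely $m+1=\frac{3n+1}{2}$.

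Each atom of a measure for $\widetilde\gamma$ on $E$ pulls back through $\widetilde\phi$, the projection, $\phi$, and the lift $z\mapsto(\pm\sqrt z)$ of Proposition~\ref{prop:one_step_reduction}, producing at most two atoms on $\cZ(p)$. This gives at most $3n+2$ atoms when $n$ is even and at most $3n+1$ when $n$ is odd. (An atom with $z=0$ gives a single atom, so the count only improves.)

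The main obstacle I expect is the index bookkeeping: the paper's $\uH_r$ subscripts are indexed by the largest moment appearing, not by matrix size, so the exact form of the column-space condition in \cite{CF91} must be aligned carefully with $\uh$. Everything else is a direct citation.
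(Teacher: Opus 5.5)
Your proposal is correct and takes the same route as the paper, which gives no argument beyond citing \cite[Theorems~5.1, 5.3]{CF91}. Your index dictionary checks out against the paper's definitions: in the even case $3n=2m$, the matrices $\uH_{3n}$ and $\uH^{[0,\infty)}_{3n-1}$ and the vector $\uh^{[0,\infty)}_{3n}$ match the Curto--Fialkow conditions, and in the odd case $3n=2m+1$ the same holds for $\uH_{3n-1}$, $\uH^{[0,\infty)}_{3n}$ and $\uh_{3n}$. The paper also leaves the \emph{Moreover} part unproved, and you justify it correctly: the rank is bounded by the size $m+1$ of $H_m$, and the lift $(x,z)\mapsto(x,\pm\sqrt z)$ in Proposition~\ref{prop:one_step_reduction} at most doubles the number of atoms.
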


\bigskip

\subsection{Case 2: In \eqref{def:E}, $E=[0,x_2-x_1]\cup[x_3-x_1,\infty)$.}

For brevity, set $c:=x_2-x_1$ and $d:=x_3-x_1$. Let $L_{\widetilde{\gamma}}$ denote the Riesz functional associated with
$\widetilde{\gamma}$.

To solve the TMP for $\widetilde{\gamma}=\{\widetilde{\gamma}_t\}_{t\le 3n}$ on $E$, we combine:
\begin{enumerate}
\item[(i)] the truncated Riesz--Haviland theorem \cite[Theorem~1.2]{CF08}, which reduces the problem to finding an extension
$\widehat{\gamma}$ for which the associated Riesz functional $L_{\widehat{\gamma}}$ is $E$--positive, and
\item[(ii)] the characterization of polynomials nonnegative on $E$ \cite[Theorem~4.1]{KMS05}, namely
\begin{equation}\label{Psatz}
    f|_{E}\ge 0
    \quad\Longleftrightarrow\quad
    f=\sigma_0+\sigma_1 x+\sigma_2(x-c)(x-d)+\sigma_3 x(x-c)(x-d),
\end{equation}
where $\sigma_0,\sigma_1,\sigma_2,\sigma_3\in \sum\mathbb{R}[x]^2$, and each term may be chosen so that its degree is bounded
by $\deg f$.
\end{enumerate}

We also require the following extension criterion for singular Hankel matrices.

\begin{proposition}\label{230810-1952}
Let $\gamma=\{\gamma_t\}_{t\le 2m}$, $m\in\mathbb{N}$, be such that $H_m(\gamma)$ is positive semidefinite and singular.
Then $\gamma$ admits an extension
\[
\widetilde{\gamma}=\{\widetilde{\gamma}_t\}_{t\le 2m+4}
\]
(with new moments $\widetilde{\gamma}_{2m+1},\widetilde{\gamma}_{2m+2},\widetilde{\gamma}_{2m+3},\widetilde{\gamma}_{2m+4}$)
such that $H_{m+2}(\widetilde{\gamma})$ is positive semidefinite if and only if $\gamma$ is positively recursively generated.
In particular, the moments $\widetilde{\gamma}_{2m+1},\widetilde{\gamma}_{2m+2},\widetilde{\gamma}_{2m+3},\widetilde{\gamma}_{2m+4}$  are uniquely
determined.
\end{proposition}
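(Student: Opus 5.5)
The plan is to reduce everything to the structure of positive semidefinite Hankel matrices with a singular block, using the Curto--Fialkow theory of \cite{CF91}. Write $r:=\Rank\gamma$. If $H_m(\gamma)$ is singular, then $r\le m$. By definition of $r$, $H_{r-1}(\gamma)$ is invertible, and the column $\gamma$-vector $(\gamma_r,\ldots,\gamma_{2r-1})^T$ lies in its range. This gives a unique column relation
\[
X^r=\varphi_0 1+\cdots+\varphi_{r-1}X^{r-1}
\]
in $H_r(\gamma)$.

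\emph{Necessity.} Suppose $H_{m+2}(\widetilde\gamma)\succeq 0$. I will use the standard fact for positive semidefinite Hankel matrices: a column relation occurring among the first columns of a psd Hankel matrix propagates by shifting to all later columns. More precisely, I will use the ``recursiveness'' lemma from \cite{CF91}, namely that if $H_{k+1}\succeq 0$ and $X^r\in\operatorname{span}\{1,\ldots,X^{r-1}\}$ with $r\le k$, then $X^{j+1}$ is given by the shifted relation for $j\le k$. Apply this to $H_{m+2}(\widetilde\gamma)$, which properly contains $H_m(\gamma)$. The relation $X^r=\sum\varphi_iX^i$ then propagates to all columns $X^j$, $r\le j\le m+2$, of $H_{m+2}(\widetilde\gamma)$. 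Reading off entries in the rows up to the original degree forces the recursion \eqref{recursion} for $\gamma_j$, $j\le 2m$. The extra room of two steps is exactly what removes the boundary issue: the last column $X^m$ of $H_m$ may fail recursiveness unless a larger psd matrix exists. Together with $H_{r-1}\succ0$, this gives that $\gamma$ is positively recursively generated.

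\emph{Sufficiency and uniqueness.} Suppose $\gamma$ is positively recursively generated. Define $\widetilde\gamma_j$ for $2m+1\le j\le 2m+4$ by the same recursion \eqref{recursion}. By \cite[Theorem~3.9]{CF91}, a positively recursively generated sequence has an $r$-atomic representing measure, namely the atoms are the roots of $t^r-\sum\varphi_it^i$. These roots are real and distinct because $H_{r-1}\succ0$ and the relation holds in the psd $H_r$. The moments of that measure satisfy the recursion for all $j$, so they coincide with $\widetilde\gamma$. Hence $H_{m+2}(\widetilde\gamma)$ is a moment matrix of a measure and is psd, and in fact it is a flat extension of rank $r$.

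For uniqueness, any psd extension must, by the necessity argument, satisfy the propagated relation in $H_{m+2}(\widetilde\gamma)$. Evaluating that relation in rows $m+1$ and $m+2$ expresses each $\widetilde\gamma_{2m+1},\ldots,\widetilde\gamma_{2m+4}$ through earlier moments. So all four new moments are forced.

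The main obstacle is the propagation step when $r=m$, i.e.\ when the only singularity is at the last column of $H_m$. In that case I must check carefully that the recursiveness lemma applies inside $H_{m+2}$, with indices $j+1\le m+2$, so that it yields $\gamma_{2m}$ correctly. I would first verify this by the Schur complement/kernel argument: if $v\in\ker H_{m}$ and the big matrix is psd, then the padded vector $(v,0,0)$ lies in $\ker H_{m+2}(\widetilde\gamma)$, and its shifts lie in the kernel as well.
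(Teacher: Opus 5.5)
Your proof of the equivalence is essentially correct, and it takes a different route from the paper. The paper proves $(\Leftarrow)$ by citing \cite[Theorem~2.4]{CF91} and $(\Rightarrow)$ by citing \cite[Theorem~2.6]{CF91}. You instead exhibit the mechanism: a padded kernel vector and its shifts for necessity, and the $r$--atomic measure on the roots of $g(t)=t^r-\sum_i\varphi_it^i$ for sufficiency.

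However, the propagation lemma you state is false at the last column. Suppose $H_k\succeq 0$ and the coefficient vector $v$ of $g$ lies in $\ker H_k$. The identity $(Sv)^TH_k(Sv)=v^TH_k(S^2v)=0$ ($S$ the shift) requires $S^2v$ to be a vector of length $k+1$. It therefore gives $t^sg\in\ker H_k$ only for $r+s\le k-1$. In $H_{m+2}(\widetilde\gamma)$ the relation propagates to the columns $X^j$ with $r\le j\le m+1$, not to $X^{m+2}$. For example, $\gamma=(1,0,0,0,1)$ gives $H_2(\gamma)=\operatorname{diag}(1,0,1)\succeq 0$ with $X=0$, yet $X^2\neq 0$.

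For necessity this does no harm: the columns up to $X^m$, read in rows up to $m$, already give the recursion for all $j\le 2m$. Your ``main obstacle'' is misplaced, though. If $r=m$, the recursion for $m\le j\le 2m$ is just $H_m(\gamma)v=0$ read row by row, so there is nothing to check. The extension is needed only when $r<m$, where the last column of $H_m(\gamma)$ may violate the shifted relation, as in the example above ($m=2$, $r=1$).

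The genuine gap is in the uniqueness step, which uses exactly the missing column. The valid relations (columns up to $X^{m+1}$, rows up to $m+2$) give the recursion only up to index $2m+3$. So $\widetilde\gamma_{2m+1},\widetilde\gamma_{2m+2},\widetilde\gamma_{2m+3}$ are forced, but $\widetilde\gamma_{2m+4}$ is not. It occurs only as the $(m+2,m+2)$ entry. Replacing it by $\widetilde\gamma_{2m+4}+\delta$ with $\delta>0$ adds $\delta e_{m+2}e_{m+2}^{T}$ to $H_{m+2}(\widetilde\gamma)$, which stays positive semidefinite. Concretely, for $\gamma=(1,0,0)$ and $m=1$, every $\widetilde\gamma=(1,0,0,0,0,0,t)$ with $t\ge 0$ gives $H_3(\widetilde\gamma)=\operatorname{diag}(1,0,0,t)\succeq 0$.

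Thus the final sentence of the statement cannot be proved as literally written, and the paper's citation-only proof does not address it. What is true, and what you should prove instead, is the following. Positivity alone forces the first three new moments. Positivity gives only $\widetilde\gamma_{2m+4}$ at least its recursive value. Equality, i.e.\ uniqueness, holds once you also require flatness, $\Rank H_{m+2}(\widetilde\gamma)=\Rank\gamma$. That is exactly the extension you construct in the sufficiency part.
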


\begin{proof}
The implication $(\Leftarrow)$ follows from \cite[Theorem~2.4]{CF91}, and $(\Rightarrow)$ follows from
\cite[Theorem~2.6]{CF91}.
\end{proof}

We now state explicit criteria for the $E$--TMP for $\widetilde\gamma=\{\widetilde\gamma_t\}_{t\le 3n}$,
distinguishing the parity of $n$.

\begin{theorem}[Even case]\label{230811-0628-even}
Let $\widetilde\gamma=\{\widetilde\gamma_t\}_{t\le 3n}$ and assume that $n$ is even. The following are equivalent:
\begin{enumerate}
\setlength{\itemsep}{5pt}
\item\label{230810-1903-pt1}
$\widetilde\gamma$ admits an $E$--representing measure.
\item\label{230810-1903-pt2}
$\widetilde\gamma$ admits an $E$--representing finitely atomic measure.
\item\label{230810-1903-pt3}
The matrices
\begin{equation}\label{230810-1333}
\uH_{3n}(\widetilde\gamma),\quad
\uH_{3n-1}^{[0,\infty)}(\widetilde\gamma),\quad
\uH_{3n}^{(-\infty,c]\cup[d,\infty)}(\widetilde\gamma),\quad
\uH_{3n}^{[0,c]\cup[d,\infty)}(\widetilde\gamma)
\end{equation}
are positive semidefinite, and one of the following holds:
\begin{enumerate}
\setlength{\itemsep}{5pt}
\item\label{230810-1937-pt1}
All matrices in \eqref{230810-1333} are positive definite.
\item\label{230810-1937-pt2}
Let $H$ be the first matrix in \eqref{230810-1333} that fails to be positive definite. Then
\[
H\in\Bigl\{\uH_{3n}(\widetilde\gamma),\ \uH_{3n}^{(-\infty,c]\cup[d,\infty)}(\widetilde\gamma)\Bigr\}.
\]
The sequence determined by $H$ is positively recursively generated. Moreover, the moments
$\gamma_{3n+1}$ and $\gamma_{3n+2}$ are uniquely determined (and hence so is
$\widehat\gamma=\{\gamma_t\}_{t\le 3n+2}$), and the extended matrices
\begin{equation}\label{230810-1341}
\uH_{3n+2}(\widehat\gamma),\quad
\uH_{3n+1}^{[0,\infty)}(\widehat\gamma),\quad
\uH_{3n+2}^{(-\infty,c]\cup[d,\infty)}(\widehat\gamma),\quad
\uH_{3n+2}^{[0,c]\cup[d,\infty)}(\widehat\gamma)
\end{equation}
are positive semidefinite.
\end{enumerate}
\end{enumerate}
\end{theorem}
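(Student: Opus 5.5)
The plan is to go through the truncated Riesz--Haviland theorem \cite[Theorem~1.2]{CF08}, with the positivity certificate \eqref{Psatz} as the main tool. The degree $3n$ is even, so write $3n=2m$. Then (1)$\Leftrightarrow$(2) is Richter's theorem \cite{Ric}.

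For (1)$\Rightarrow$(3), suppose $\widetilde\gamma$ has a finitely atomic $E$--representing measure $\mu$. The four matrices in \eqref{230810-1333} are Gram matrices of the quadratic forms $f\mapsto L_{\widetilde\gamma}(g f^2)$ with $g\in\{1,x,(x-c)(x-d),x(x-c)(x-d)\}$. Each such $g$ is nonnegative on $E$, so all four matrices are positive semidefinite. If all of them are definite we are in (3a). Otherwise, let $H$ be the first one that is singular. A kernel vector of $H$ gives a nonzero polynomial $f$ with $\int gf^2\,d\mu=0$, so $\supp\mu\subseteq\mathcal Z(gf)$. This forces $\mu$ to be supported on the zeros of $gf$ together with the boundary points $0,c,d$ contained in $\mathcal Z(g)$.

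I would then rule out $H$ being one of the two odd localizing matrices. If $H=\uH^{[0,\infty)}_{3n-1}$ is singular while $\uH_{3n}\succ 0$, the measure sits on $\{0\}\cup\mathcal Z(f)$ with few atoms. Comparing the rank of the Hankel matrix with this atom count should contradict definiteness of $\uH_{3n}$ via the standard rank inequalities of \cite{CF91}; the case $\uH^{[0,c]\cup[d,\infty)}_{3n}$ is handled the same way. For the allowed $H$, $\mu$ is a measure whose atom count equals the rank of the corresponding Hankel-type sequence. The moments of $\mu$ supply an extension $\widehat\gamma$ up to degree $3n+2$, and all extended matrices in \eqref{230810-1341} are psd by the same Gram argument. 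Proposition~\ref{230810-1952}, applied to $H$ (or to its shifted localizing sequence), shows that $\gamma_{3n+1},\gamma_{3n+2}$ are uniquely determined and that the sequence is positively recursively generated.

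For (3)$\Rightarrow$(1), by \cite[Theorem~1.2]{CF08} it suffices to produce an extension $\widehat\gamma$ of degree $3n+2$ such that $L_{\widehat\gamma}$ is $E$--positive on $\mathbb R[x]_{\le 3n+2}$. By \eqref{Psatz} with degree bounds, this is equivalent to positive semidefiniteness of the four matrices in \eqref{230810-1341}. In case (3b) these are assumed psd, and we are done. In case (3a) all matrices are positive definite, and I would choose $\gamma_{3n+1}$ and then $\gamma_{3n+2}$ large in an open interval. Positive definiteness of $\uH^{[0,\infty)}_{3n-1}$ makes the Schur complement condition for $\uH^{[0,\infty)}_{3n+1}$ a lower bound on $\gamma_{3n+1}$. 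For $\uH^{[0,c]\cup[d,\infty)}_{3n+2}$, the new corner entry $cd\gamma_{3n}-(c+d)\gamma_{3n+1}+\gamma_{3n+2}$ involves $\gamma_{3n+2}$ with coefficient one, so after fixing $\gamma_{3n+1}$, taking $\gamma_{3n+2}$ large enough makes $\uH_{3n+2}$, $\uH^{(-\infty,c]\cup[d,\infty)}_{3n+2}$ and $\uH^{[0,c]\cup[d,\infty)}_{3n+2}$ all definite. I still need to check how $\gamma_{3n+1}$ enters the off-diagonal blocks $\uh$ of these matrices, and that a large $\gamma_{3n+2}$ dominates them.

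The main obstacle is the singular case of (1)$\Rightarrow$(3): showing that the first degenerate matrix cannot be an odd localizing matrix, and that the extension coming from the measure is exactly the one forced by positive recursiveness. Here I would rely on rank counting from \cite{CF91}, using that a measure with the recursion given by $gf$ has at most $\deg(gf)$ atoms.
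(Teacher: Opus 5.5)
Your route is the paper's. You use Richter for (1)$\Leftrightarrow$(2). For necessity you read the four matrices as Gram matrices of $gf^2$ with $g\ge 0$ on $E$, use support on zero sets, and invoke Proposition~\ref{230810-1952}. For sufficiency you use \cite[Theorem~1.2]{CF08} with \eqref{Psatz}, which is immediate in case (3b) and needs large new moments in case (3a). Your necessity argument is in fact more explicit than the paper's column-coincidence sketch.

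\textbf{A step that fails as written.} In case (3a) you make the $x(x-c)(x-d)$ localizing matrix definite by taking $\gamma_{3n+2}$ large, but $\gamma_{3n+2}$ does not occur in that matrix. In degree $3n+2$ it is $H_{(3n-2)/2}\bigl(\mathcal E(\mathcal E-c\mathcal I)(\mathcal E-d\mathcal I)\widehat\gamma\bigr)$, because $3+2k\le 3n+2$ forces $2k\le 3n-2$; the subscript $3n+2$ in \eqref{230810-1341} is misleading here. Its corner is $cd\gamma_{3n-1}-(c+d)\gamma_{3n}+\gamma_{3n+1}$, and all its other entries involve only moments of degree at most $3n$. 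So this matrix must be controlled by $\gamma_{3n+1}$, exactly like $\uH^{[0,\infty)}_{3n+1}$.

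The fix follows the paper's order:
\begin{enumerate}
\item In both odd-type matrices, $\gamma_{3n+1}$ appears only in the corner, with coefficient $1$, above a positive definite leading block. Choose $\gamma_{3n+1}$ above both Schur-complement lower bounds.
\item Then take $\gamma_{3n+2}$ large for $\uH_{3n+2}$ and $\uH^{(-\infty,c]\cup[d,\infty)}_{3n+2}$.
\end{enumerate}
The point you left open is harmless. Once $\gamma_{3n+1}$ is fixed, the last columns of these two matrices are fixed except for their corners, which equal $\gamma_{3n+2}$ plus a constant, over positive definite leading blocks.

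\textbf{A smaller imprecision.} Ruling out $H=\uH^{[0,c]\cup[d,\infty)}_{3n}$ cannot be done ``the same way'' by comparing with $\uH_{3n}$. The support $\{0,c,d\}\cup\mathcal Z(f)$ with $\deg f\le\frac{3n-4}{2}$ allows up to $\frac{3n+2}{2}$ atoms. That is exactly the size of $\uH_{3n}$, so no rank contradiction arises there.

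You must compare instead with $\uH^{(-\infty,c]\cup[d,\infty)}_{3n}$, which has size $\frac{3n}{2}$ and does not see atoms at $c$ and $d$. Concretely, $xf$ has degree at most $\frac{3n-2}{2}$. Also $(x-c)(x-d)(xf)^2=x\cdot x(x-c)(x-d)f^2$ vanishes on $\supp\mu$. So the coefficient vector of $xf$ is a nonzero kernel vector of that matrix, contradicting its definiteness. This is the pairing the paper uses.
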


\begin{theorem}[Odd case]\label{230811-0628-odd}
Assume the notation above and suppose that $n$ is odd. Then the following are equivalent:
\begin{enumerate}
\setlength{\itemsep}{5pt}
\item\label{230810-1903-pt1-odd}
$\widetilde\gamma$ admits an $E$--representing measure.
\item\label{230810-1903-pt2-odd}
$\widetilde\gamma$ admits an $E$--representing finitely atomic measure.
\item\label{230810-1903-pt3-odd}
The matrices
\begin{equation}\label{230810-1333-odd}
\uH_{3n-1}(\widetilde\gamma),\quad
\uH_{3n}^{[0,\infty)}(\widetilde\gamma),\quad
\uH_{3n-1}^{(-\infty,c]\cup[d,\infty)}(\widetilde\gamma),\quad
\uH_{3n}^{[0,c]\cup[d,\infty)}(\widetilde\gamma)
\end{equation}
are positive semidefinite, and one of the following holds:
\begin{enumerate}
\setlength{\itemsep}{5pt}
\item\label{230810-1937-pt1-odd}
All matrices in \eqref{230810-1333-odd} are positive definite.
\item\label{230810-1937-pt2-odd}
Let $H$ be the first matrix in \eqref{230810-1333-odd} that fails to be positive definite. The sequence determined by $H$ is
positively recursively generated,
and, for the uniquely determined moment $\gamma_{3n+1}$ (and hence
$\widehat\gamma=\{\gamma_t\}_{t\le 3n+1}$), the extended matrices
\begin{equation}\label{230810-1341-odd}
\uH_{3n+1}(\widehat\gamma),\quad
\uH_{3n+1}^{(-\infty,c]\cup[d,\infty)}(\widehat\gamma)
\end{equation}
are positive semidefinite.
\end{enumerate}
\end{enumerate}
\end{theorem}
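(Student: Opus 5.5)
We prove Theorem~\ref{230811-0628-odd} along the lines of the even case, combining the truncated Riesz--Haviland theorem \cite[Theorem~1.2]{CF08} with the Positivstellensatz \eqref{Psatz}. The plan is to show \eqref{230810-1903-pt1-odd}$\Rightarrow$\eqref{230810-1903-pt3-odd}$\Rightarrow$\eqref{230810-1903-pt2-odd}$\Rightarrow$\eqref{230810-1903-pt1-odd}. The last implication is trivial, and \eqref{230810-1903-pt1-odd}$\Rightarrow$\eqref{230810-1903-pt2-odd} also follows from Richter's theorem \cite{Ric}.

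\textbf{Necessity.} Suppose $\mu$ is an $E$--representing measure for $\widetilde\gamma$. Since $3n$ is odd, the matrices in \eqref{230810-1333-odd} are the Gram matrices of $L_{\widetilde\gamma}$ against the four multipliers $1$, $x$, $(x-c)(x-d)$, $x(x-c)(x-d)$, each of which is nonnegative on $E$. Hence all four are positive semidefinite. If one of them is singular, take the first such matrix $H$. A kernel vector of $H$ gives a polynomial $g$ with $\int s\,g^2\,d\mu=0$, where $s$ is the corresponding multiplier. So $\mu$ is supported on the finite set $\mathcal Z(sg)\cap E$. Then $\mu$ itself furnishes the moment $\gamma_{3n+1}$. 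The standard singular-Hankel theory (Proposition~\ref{230810-1952}, i.e.\ \cite[Theorems~2.4, 2.6]{CF91}, applied to the sequence defining $H$) shows that this sequence is positively recursively generated and that the extension is unique. The extended matrices \eqref{230810-1341-odd} are then Gram matrices of the measure, hence positive semidefinite.

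\textbf{Sufficiency.} First consider case \eqref{230810-1937-pt1-odd}, where all four matrices are positive definite. We want an extension $\widehat\gamma$ of degree $3n+1$ whose Riesz functional is strictly $E$--positive on $\mathbb R[x]_{\le 3n+1}$. Using \eqref{Psatz} with the degree bounds, $L_{\widehat\gamma}(f)\ge 0$ for every $f\ge0$ on $E$ with $\deg f\le 3n+1$ is equivalent to positive semidefiniteness of $\uH_{3n+1}(\widehat\gamma)$, $\uH^{[0,\infty)}_{3n}$, $\uH^{(-\infty,c]\cup[d,\infty)}_{3n+1}(\widehat\gamma)$ and $\uH^{[0,c]\cup[d,\infty)}_{3n}$. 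The second and fourth matrices do not involve $\gamma_{3n+1}$. The other two are one-step extensions of positive definite matrices, each with a single new diagonal entry that is increasing in $\gamma_{3n+1}$. So choosing $\gamma_{3n+1}$ large makes everything positive definite. An $E$--positive functional on degree $3n+1$, whose restriction is strictly positive, then yields via \cite[Theorem~1.2]{CF08} a representing measure for $\widetilde\gamma$, which may be taken finitely atomic.

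In case \eqref{230810-1937-pt2-odd}, recursiveness forces a unique $\gamma_{3n+1}$. The hypothesis that \eqref{230810-1341-odd} is positive semidefinite, together with the unchanged $\uH^{[0,\infty)}_{3n}$ and $\uH^{[0,c]\cup[d,\infty)}_{3n}$, gives $E$--positivity of $L_{\widehat\gamma}$ again by \eqref{Psatz}, and \cite[Theorem~1.2]{CF08} finishes the argument.

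The main obstacle is the degree bookkeeping in \eqref{Psatz}. We must verify that every $f\ge0$ on $E$ with $\deg f\le 3n+1$ (and $3n+1$ even) has a certificate whose four terms involve only moments up to $3n+1$, so that they are controlled exactly by the listed localizing matrices. In the singular case we must also ensure that the truncated Riesz--Haviland theorem applies, since it requires positivity of an extension by one degree. For this we plan to rely on the rigidity from Proposition~\ref{230810-1952}: the recursively generated extension continues to all degrees, so it can be extended one more step while preserving positivity.
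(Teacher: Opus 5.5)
Your proposal is correct and matches the paper's proof. It uses Richter's theorem for (1)$\Leftrightarrow$(2) and positivity of the localizing Gram matrices for necessity; the paper simply calls necessity clear, and you justify it properly via the finite support of $\mu$ and the uniqueness in Proposition~\ref{230810-1952}. For sufficiency it uses the truncated Riesz--Haviland theorem with the degree-bounded certificate \eqref{Psatz}, choosing $\gamma_{3n+1}$ large in case (a) and the recursively determined value in case (b). The concern in your last paragraph is unnecessary: for odd $3n$, the sequence $\widehat\gamma=\{\gamma_t\}_{t\le 3n+1}$ is already the extension required by \cite[Theorem~1.2]{CF08}, and the degree bounds you worry about are supplied by \eqref{Psatz}, so no further extension step is needed.
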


\begin{proof}[Proof of Theorem~\ref{230811-0628-even}]
The equivalence between \eqref{230810-1903-pt1} and \eqref{230810-1903-pt2} follows from Richter's theorem \cite{Ric}.
The implication $\eqref{230810-1903-pt2}\Rightarrow \eqref{230810-1903-pt3}$ is a consequence of the fact that
singularity of the localizing Hankel matrix $\uH_{3n-1}^{[0,\infty)}(\widetilde\gamma)$ forces
$\widetilde\gamma$ to be positively recursively generated; in particular, $\uH_{3n}(\widetilde\gamma)$ is then
singular as well. An analogous argument applies to the pair of localizing matrices
$\uH_{3n}^{(-\infty,c]\cup[d,\infty)}(\widetilde\gamma)$ and
$\uH_{3n}^{[0,c]\cup[d,\infty)}(\widetilde\gamma)$.

It remains to prove the implication is $\eqref{230810-1903-pt3}\Rightarrow \eqref{230810-1903-pt2}$.
By the truncated Riesz--Haviland theorem \cite[Theorem~1.2]{CF08} together with the Positivstellensatz for $E$
\cite[Theorem~4.1]{KMS05} (cf.\ \eqref{Psatz}), it suffices to extend $\widetilde\gamma$ to $\widehat\gamma=\{\gamma_t\}_{t\le 3n+2}$
by choosing $\gamma_{3n+1}$ and $\gamma_{3n+2}$ so that all matrices in \eqref{230810-1341} are positive semidefinite.

Introduce variables $\mathbf{x}$ and $\mathbf{y}$ representing $\gamma_{3n+1}$ and $\gamma_{3n+2}$, respectively, and set
$\widehat\gamma(\mathbf{x},\mathbf{y})=(\widetilde\gamma,\mathbf{x},\mathbf{y})$. Then
\begin{align*}
\uH_{3n+2}\big(\widehat\gamma(\mathbf{x},\mathbf{y})\big)
&=
\begin{pmatrix}
\uH_{3n}(\widetilde\gamma) & \begin{array}{c} \ast \\ \mathbf{x} \end{array} \\
\begin{array}{cc} \ast & \mathbf{x} \end{array} & \mathbf{y}
\end{pmatrix},\\[0.6em]
\uH^{[0,\infty)}_{3n+1}\big(\widehat\gamma(\mathbf{x},\mathbf{y})\big)
&=
\begin{pmatrix}
\uH^{[0,\infty)}_{3n-1}(\widetilde\gamma) & \uh^{[0,\infty)}_{3n}(\widetilde\gamma)\\[0.4em]
\big(\uh^{[0,\infty)}_{3n}(\widetilde\gamma)\big)^{T} & \mathbf{x}
\end{pmatrix},\\[0.6em]
\uH_{3n+2}^{(-\infty,c]\cup[d,\infty)}\big(\widehat\gamma(\mathbf{x},\mathbf{y})\big)
&=
\begin{pmatrix}
\uH_{3n}^{(-\infty,c]\cup[d,\infty)}(\widetilde\gamma) &
\begin{array}{c}
\ast\\
\mathbf{x}-(c+d)\gamma_{3n}+cd\,\gamma_{3n-1}
\end{array}\\[0.7em]
\begin{array}{cc}
\ast &
\mathbf{x}-(c+d)\gamma_{3n}+cd\,\gamma_{3n-1}
\end{array}
&
\mathbf{y}-(c+d)\mathbf{x}+cd\,\gamma_{3n}
\end{pmatrix},\\[0.6em]
\uH_{3n+2}^{[0,c]\cup[d,\infty)}\big(\widehat\gamma(\mathbf{x},\mathbf{y})\big)
&=
\begin{pmatrix}
\uH_{3n}^{[0,c]\cup[d,\infty)}(\widetilde\gamma) & \uh_{3n+1}^{[0,c]\cup[d,\infty)}(\widetilde\gamma)\\[0.4em]
\big(\uh_{3n+1}^{[0,c]\cup[d,\infty)}(\widetilde\gamma)\big)^{T} &
\mathbf{x}-(c+d)\gamma_{3n}+cd\,\gamma_{3n-1}
\end{pmatrix},
\end{align*}
where each entry marked by $\ast$ is determined uniquely by $\widetilde\gamma$.

\smallskip
Assume first that we are in case \eqref{230810-1937-pt1}. Choose $x$ sufficiently large so that both matrices
$\uH^{[0,\infty)}_{3n+1}\big(\widehat\gamma(x,\mathbf{y})\big)$ and
$\uH_{3n+2}^{[0,c]\cup[d,\infty)}\big(\widehat\gamma(x,\mathbf{y})\big)$ are positive definite. Then choose $y$ sufficiently large so that the matrices
$\uH_{3n+2}\big(\widehat\gamma(x,y)\big)$ and
$\uH_{3n+2}^{(-\infty,c]\cup[d,\infty)}\big(\widehat\gamma(x,y)\big)$ are positive definite as well.


\smallskip
Assume next that we are in case \eqref{230810-1937-pt2}. By Proposition~\ref{230810-1952}, the first singular matrix $H$
admits a unique chain of positive semidefinite Hankel extensions, which produces unique candidates for $\gamma_{3n+1}$ and
$\gamma_{3n+2}$. One then checks whether the remaining matrices in \eqref{230810-1341} are positive semidefinite.

We claim that if $H=\uH_{3n-1}^{[0,\infty)}(\widetilde\gamma)$ or $H=\uH_{3n}^{[0,c]\cup[d,\infty)}(\widetilde\gamma)$,
then no representing measure exists. Indeed:
\begin{enumerate}
\setlength{\itemsep}{5pt}
\item If $H=\uH_{3n-1}^{[0,\infty)}(\widetilde\gamma)$, then by definition $\uH_{3n}(\widetilde\gamma)$ is positive definite.
However, the first $\frac{3n}{2}$ columns of any extension $\uH_{3n+1}^{[0,\infty)}(\widehat\gamma)$ coincide with columns of
$\uH_{3n}(\widetilde\gamma)$, which forces $\uH_{3n}(\widetilde\gamma)$ to have a nontrivial kernel-a contradiction.
\item If $H=\uH_{3n}^{[0,c]\cup[d,\infty)}(\widetilde\gamma)$, then by definition the other three matrices in
\eqref{230810-1341} are positive definite. But the first $\frac{3n}{2}-1$ columns of any extension
$\uH_{3n+2}^{(-\infty,c]\cup[d,\infty)}(\widehat\gamma)$ coincide with columns of
$\uH_{3n}^{[0,c]\cup[d,\infty)}(\widetilde\gamma)$, again forcing a nontrivial kernel-a contradiction.
\end{enumerate}
This completes the proof.
\end{proof}

\begin{proof}[Proof of Theorem~\ref{230811-0628-odd}]
The equivalence between \eqref{230810-1903-pt1-odd} and \eqref{230810-1903-pt2-odd} follows from Richter's theorem.
The implication $\eqref{230810-1903-pt2}\Rightarrow \eqref{230810-1903-pt3}$ is clear.
It remains to prove $\eqref{230810-1903-pt3-odd}\Rightarrow \eqref{230810-1903-pt2-odd}$.
By \cite[Theorem~1.2]{CF08} and \cite[Theorem~4.1]{KMS05}, it suffices to extend $\widetilde\gamma$ to
$\widehat\gamma=\{\gamma_t\}_{t\le 3n+1}$ by choosing $\gamma_{3n+1}$ so that the matrices in \eqref{230810-1333-odd} and
\eqref{230810-1341-odd} are positive semidefinite.

Let $\mathbf{x}$ represent $\gamma_{3n+1}$ and write $\widehat\gamma(\mathbf{x})=(\widetilde\gamma,\mathbf{x})$. Then
\begin{align*}
\uH_{3n+1}\big(\widehat\gamma(\mathbf{x})\big)
&=
\begin{pmatrix}
\uH_{3n-1}(\widetilde\gamma) & \uh_{3n}(\widetilde\gamma)\\
\uh_{3n}(\widetilde\gamma)^{T} & \mathbf{x}
\end{pmatrix},\\[0.6em]
\uH_{3n+1}^{(-\infty,c]\cup[d,\infty)}\big(\widehat\gamma(\mathbf{x})\big)
&=
\begin{pmatrix}
\uH_{3n-1}^{(-\infty,c]\cup[d,\infty)}(\widetilde\gamma) &
\uh_{3n}^{(-\infty,c]\cup[d,\infty)}(\widetilde\gamma)\\
\big(\uh_{3n}^{(-\infty,c]\cup[d,\infty)}(\widetilde\gamma)\big)^T &
\mathbf{x}-(c+d)\gamma_{3n}+cd\,\gamma_{3n-1}
\end{pmatrix}.
\end{align*}

Assume first that we are in case \eqref{230810-1937-pt1-odd}. Choose $x$ sufficiently large so that both matrices above
 are positive definite. 

Assume next that we are in case \eqref{230810-1937-pt2-odd}. Proposition~\ref{230810-1952} implies that $H$ admits a unique chain
of positive semidefinite Hankel extensions, which yields the unique candidate for $\gamma_{3n+1}$. One then checks whether the
 matrices in \eqref{230810-1341-odd} are positive semidefinite. 
\end{proof}


\begin{thebibliography}{Arv666}
\bibitem[AK62]{AK62}
N.\ I.\ Akhiezer, M.\ Krein,
\textit{Some Questions in the Theory of Moments},
Translations of Mathematical Monographs, Vol.\ 2,
American Mathematical Society, Providence, 1962.

\bibitem[BBS+]{BBS+}
L.\ Baldi, B.\ Blekherman, R.\ Sinn,
\textit{Nonnegative Polynomials and Truncated Moment Problem on Curves},
arXiv preprint, \url{https://arxiv.org/abs/2407.06017}

\bibitem[BZ21]
{BZ21}
	A.\ Bhardwaj, {{A.\ Zalar}}:
		The tracial moment problem on quadratic varieties, 
			\textit{J.\ Math.\ Anal.\ Appl.}  498 (2021) 39 pp.\

\bibitem[BF20]{BF20}
G.\ Blekherman, L.\ Fialkow,
\textit{The core variety and representing measures in the truncated moment problem},
J.\ Operator Theory 84 (2020), 185--209.

\bibitem[CF91]{CF91}
R.\ Curto, L.\ Fialkow,
\textit{Recursiveness, positivity, and truncated moment problems},
Houston J.\ Math.\ 17 (1991), 603--635.

\bibitem[CF96]{CF96}
R.\ Curto, L.\ Fialkow,
\textit{Solution of the truncated complex moment problem for flat data},
Mem.\ Amer.\ Math.\ Soc.\ 119 (1996).

\bibitem[CF02]{CF02}
R.\ Curto, L.\ Fialkow,
\textit{Solution of the singular quartic moment problem},
J.\ Operator Theory 48 (2002), 315--354.

\bibitem[CF04]{CF04}
R.\ Curto, L.\ Fialkow,
\textit{Solution of the truncated parabolic moment problem},
Integral Equ.\ Operator Theory 50 (2004), 169--196.

\bibitem[CF05]{CF05}
R.\ Curto, L.\ Fialkow,
\textit{Solution of the truncated hyperbolic moment problem},
Integral Equ.\ Operator Theory 52 (2005), 181--218.

\bibitem[CF08]{CF08}
R.\ Curto, L.\ Fialkow,
\textit{An analogue of the Riesz--Haviland theorem for the truncated moment problem},
J.\ Funct.\ Anal.\ 255 (2008), 2709--2731.

\bibitem[CF13]{CF13}
R.\ Curto, L.\ Fialkow,
\textit{Recursively determined representing measures for bivariate truncated moment sequences},
J.\ Operator Theory 70(2) (2013), 401--436.

\bibitem[CY16]{CY16}
R.\ Curto, S.\ Yoo,
\textit{Concrete solution to the nonsingular quartic binary moment problem},
Proc.\ Amer.\ Math.\ Soc.\ 144 (2016), 249--258.



\bibitem[EKT25]
{EKT25}
C.\ Emary, D.P.\ Kimsey, C.P.\ Tantalakis, 
 \textit{On a solution of the multidimensional truncated moment problem on vertices of the hypercube based on Bell inequalities},
 {Trans.\ Amer.\ Math.\ Soc.} 378 (2025), 6831--6855.

\bibitem[F95]{F95}
L.\ Fialkow,
\textit{Positivity, extensions and the truncated complex moment problem},
Contemporary Math.\ 185 (1995), 133--150.

\bibitem[Fia11]{Fia11}
L.\ Fialkow,
\textit{Solution of the truncated moment problem with variety $y=x^3$},
Trans.\ Amer.\ Math.\ Soc.\ 363 (2011), 3133--3165.

\bibitem[Fia15]{Fia15}
L.\ Fialkow,
\textit{The truncated moment problem on parallel lines},
in: \textit{The Varied Landscape of Operator Theory},
Theta Found.\ Internat.\ Book Ser.\ Math.\ Texts, Vol.\ 20,
Amer.\ Math.\ Soc., 2015, 99--116.

\bibitem[FN]{fn}
L.\ Fialkow, J.\ Nie,
\textit{Positivity of Riesz functionals and solutions of quadratic and quartic moment problems},
J.\ Funct.\ Anal.\ 258 (2010), 328--356.

\bibitem[FZ+]{fz}
L.\ Fialkow, A.\ Zalar,
\textit{A core variety approach to the pure $Y=X^d$ truncated moment problem: part 1},
arXiv preprint, \url{https://arxiv.org/abs/2508.10375}.

\bibitem[Ioh82]{Ioh82}
I.\ S.\ Iohvidov,
\textit{Hankel and Toeplitz Matrices and Forms: Algebraic Theory},
Birkh\"auser, Boston, 1982.

\bibitem[KN77]{KN77}
K.\ G.\ Krein, A.\ A.\ Nudelman,
\textit{The Markov Moment Problem and Extremal Problems},
Translations of Mathematical Monographs,
American Mathematical Society, 1977.

\bibitem[KMS05]{KMS05}
S.\ Kuhlmann, M.\ Marshall, N.\ Schwartz,
\textit{Positivity, sums of squares and the multidimensional moment problem II},
Adv.\ Geom.\ 5 (2005), 583--607.

\bibitem[KZ+]{KZ}
M.\ Kummer, A.\ Zalar,
\textit{Positive polynomials and the truncated moment problem on plane cubics},
arXiv preprint, \url{https://arxiv.org/abs/2508.13850v1}.

\bibitem[Ric]{Ric}
H.\ Richter,
\textit{Parameterfreie Absch\"at}zung und Realisierung von Erwartungswerten,
Bl.\ der Deutsch.\ Ges.\ Versicherungsmath.\ 3 (1957), 147--161.

\bibitem[Sch17]{sch}
K.\ Schm\"udgen,
\textit{The Moment Problem},
Graduate Texts in Mathematics, Vol.\ 277,
Springer, 2017.

\bibitem[ST]{ST}
J.\ Shohat, J.\ Tamarkin,
\textit{The Problem of Moments},
Math.\ Surveys I, Amer.\ Math.\ Soc., Providence, 1943.

\bibitem[Wol]{Wol}
Wolfram Research, Inc.,
\textit{Mathematica}, Version 14.2,
Wolfram Research, Inc., Champaign, IL, 2025.

\bibitem[YZ24]{yz}
S.\ Yoo, A.\ Zalar,
\textit{The truncated moment problem on reducible cubic curves I: Parabolic and Circular type relations},
Complex Anal.\ Oper.\ Theory 18 (2024), 111, 54 pp.

\bibitem[YZ+]{yz+}
S.\ Yoo, A.\ Zalar,
\textit{Bivariate Truncated Moment Sequences with the Column Relation $XY=X^m+q(X)$, with $q$ of degree $m-1$},
arXiv preprint, \url{https://arxiv.org/pdf/2412.21020}.

\bibitem[YZ++]
{YZ25+}
{S. Yoo}, {{ A.\ Zalar}}: 
\textit{Constructive approach to the truncated moment problem on reducible cubic curves: Hyperbolic type relations}, 
{arXiv preprint}
\url{https://arxiv.org/pdf/2510.15131}

\bibitem[Zal21]{Zal21}
A.\ Zalar,
\textit{The truncated Hamburger moment problem with gaps in the index set},
Integral Equ.\ Operator Theory 93 (2021), 36 pp.

\bibitem[Zal22a]{Zal22a}
A.\ Zalar,
\textit{The truncated moment problem on the union of parallel lines},
Linear Algebra Appl.\ 649 (2022), 186--239.

\bibitem[Zal22b]{Zal22b}
A.\ Zalar,
\textit{The strong truncated Hamburger moment problem with and without gaps},
J.\ Math.\ Anal.\ Appl.\ 516 (2022), 21 pp.

\bibitem[Zal23]{Zal23}
A.\ Zalar,
\textit{The truncated moment problem on curves $y=q(x)$ and $yx^\ell=1$},
Linear and Multilinear Algebra (2023), 45 pp.


\end{thebibliography}
\end{document}